\documentclass{amsart}
\usepackage{xcolor,enumerate}       % blackboard math symbols
 \definecolor{darkgreen}{rgb}{0,0.5,0}
\usepackage{blindtext}
\usepackage{mathtools}
\usepackage{tikz}
\usetikzlibrary{arrows.meta,calc,positioning,decorations.pathmorphing}

\usepackage[T1]{fontenc}    % use 8-bit T1 fonts
\usepackage{hyperref}       % hyperlinks
\usepackage{url}            % simple URL typesetting
\usepackage{amsmath,amssymb}
\usepackage{amsmath,mathrsfs,bm}
\usepackage{parskip}
\usepackage{paralist}
\usepackage{hyperref}
\usepackage{bigints}
\usepackage{graphicx}     % For including images
\usepackage{subcaption}   % For subfigures with individual captions\textbf{}
\usepackage{caption}
\usepackage{graphicx} % Required for inserting images
\usepackage{comment}
\usepackage{mathrsfs}

\usepackage{lipsum}
\usepackage{amsfonts}
\usepackage{epstopdf}
\ifpdf
  \DeclareGraphicsExtensions{.eps,.pdf,.png,.jpg}
\else
  \DeclareGraphicsExtensions{.eps}
\fi

\usepackage{algorithm}       % Floating environment
\usepackage{algpseudocode}   % Pseudocode formatting

\newtheorem{theorem}{Theorem}[section]

\newtheorem{lemma}[theorem]{Lemma}

\newtheorem{proposition}[theorem]{Proposition}

\newtheorem{example}[theorem]{Example}

\newtheorem{assumption}{Assumption}

\numberwithin{equation}{section}

\newcommand{\R}{\mathbb{R}}
\newcommand{\map}[3]{#1:#2 \rightarrow #3}
\newcommand{\E}{\mathbb{E}}
\newcommand{\KL}{D_{\mathrm{KL}}}

\newcommand{\tr}{\mathrm{tr}}

\begin{document}

\begin{abstract}
We study the least-energy way to reshape a probability distribution when motion is constrained to a horizontal bundle, i.e., optimal transport and distribution steering in sub-Riemannian geometry—motivated by density control over underactuated systems. To obtain a continuous and numerically tractable formulation, we introduce an entropic regularization by adding small noise aligned with the control directions and study the associated Schr\"odinger bridge problem. The resulting reference process is a degenerate diffusion on the sub-Riemannian manifold. Under bracket-generating hypotheses we obtain smooth, strictly positive transition densities and a forward--backward characterization of the optimal bridge. This leads to a practical Sinkhorn-type algorithm for the Schr\"odinger potentials and, as the noise level vanishes, a recovery of the deterministic sub-Riemannian optimal transport problem. We demonstrate with a numerical example.
\end{abstract}

\title[From Schr\"odinger Bridge to Optimal Transport over Sub-Riemannian Manifolds]{From Schr\"odinger Bridge to Optimal Transport over Sub-Riemannian Manifolds}

%\thanks{The work of the author is supported by the Natural Sciences and Engineering Research Council of Canada.} 

\author[D. O. Adu, K. Elamvazhuthi, and B. Gharesifard ]{D. O. Adu, K. Elamvazhuthi, and B. Gharesifard}
%\address{Department of Mathematics and Statistics\\
%Queen's University\\
%Kingston, ON, Canada}
%\email{bahman.gharesifard@queensu.ca}
%\urladdr{https://gharesifard.github.io}

\maketitle

\section{Introduction}\label{sec:intro}
Motivated by underactuated vehicles~\cite{murray2017mathematical,lee2010geometric}, minimum effort control over Sub-Riemannan manifolds~\cite{tiwariJena2020optimal,tiwari2022optimal,halder2021sub} and the need to reshape an entire distribution rather than single trajectories~\cite{brockett2007optimal,agrachev2009optimal,chen2021optimal,adu2024schrodinger,hindawi2011mass}, we ask the following question: what is the minimum-effort feedback law that steers the ensemble of trajectories of a control-affine system from an initial to a prescribed terminal distribution? Usually, initial data comprises an initial and a terminal probability measure $\mu_0,\mu_f\in\mathcal{P}(\mathbb{R}^d)$ with densities $\rho_0,\rho_f$ on $\mathbb{R}^d$ and a time-varying matrix-valued map $\map{g}{[0,t_f]\times\mathbb{R}^d}{\mathbb{R}^{d\times m}}$. The goal is to find a control $u$, if it exists, that transports with minimum effort along the control-affine system
\begin{equation}\label{eq:intro_det_dyn}
  \dot{x}_t^u  =  g(t, x_t^u)u(t, x_t^u), \qquad  x_0^u \sim \mu_0,\quad t\in[0,t_f].
\end{equation}
Here, $x^u_t\in\mathbb{R}^{d}$ denotes the state process influenced by the time-varying feedback control $\map{u}{[0,t_f]\times\mathbb{R}^m}{\mathbb{R}^{d}}$. The notation $x_0^u \sim \mu_0$ implies that at $t=0$ the random variable $x_0^u$ is distribution according to $\mu_0$. We say that the system is underactuated whenever $m<d$. We measure effort via %an $\alpha$-power control cost, with $\alpha\in\{1,2\}$ in applications that we consider:
\begin{equation}\label{eq:intro_det_obj}
  \inf_{u\in\mathcal U}\ \E \left[\int_{0}^{t_f}\frac12 \|u(t, x_t^u)\|^2 dt\right]
  \quad\text{s.t.}\quad  x_{t_f}^u\sim\mu_f,
\end{equation}
%\margin{Make this first footnote part of the text. Not good to have a footnote on page one.}
where the expectation is with respect to the law of $x^u$ from $ x_0^u\sim\mu_0$ to $x_{t_f}^u\sim\mu_f$. Here a convenient admissible class is 
%\margin{B: make the way you put expectation behind the integral consistent throughout. I fixed this one}
$$\mathcal U:=\Big\{u:\ [0,t_f]\times\mathbb{R}^d\to\mathbb{R}^m:
\E \left[\int_0^{t_f}\|u(t, x_t^u)\|^2 dt\right]<\infty\Big\}.$$
Throughout, we assume that the densities of $\rho_0,\rho_f$ are positive and compactly supported. The latter assumption can be replaced with having an exponential tail on $\R^d$. The structural constraint $m<d$ implies that the admissible velocities $\dot{x}_t$ in~\eqref{eq:intro_det_dyn} are constrained to lie in the distribution $\dot{x}_t\in D_{(t,x_t)}$, where
\begin{equation}\label{eq: time_varying_distrubtion}
\mathcal D_{(t,x_t)}:=\mathrm{span}\{g_1(t,x_t),\dots,g_m(t,x_t)\}\subset T_{x_t}\R^d\equiv\R^d,    
\end{equation}
 with $g_i(t,x_t)$ the $i$-th column of $g(t,x_t)$ in~\eqref{eq:intro_det_dyn}. %\margin{So I like to make sure that: whenever you refer to $g_i$ as a column, you write $g_i(t,x_t)$, as otherwise, this is a vector field}
 This distribution induces a sub-Riemannian structure with a metric $g_x^{\text{SR}}(v,v) := v^\top G(x)^\dagger v$ for $v \in \mathcal{D}_{(t,x)}$, where $G^\dagger$ is the pseudoinverse of $G := g g^\top$. The associated sub-Riemannian distance is  
 \[
 d_{SR}(x_0, x_f) = \inf\left\{\int_0^{t_f} \sqrt{g^{SR}_{x_t}(\dot{x}_t, \dot{x}_t)} dt : \dot{x}_t \in \mathcal{D}_{(t,x_t)},~  ~  x_{t_f}=x_f\right\}
 \]
 and using~\eqref{eq:intro_det_obj}, we have that 
\begin{equation*}
%c_2(x_0, x_f) = 
\frac12d^2_{SR}(x_0, x_f)= \inf\left\{\int_0^{t_f} \frac12\|\dot{x}_t\|_{G^\dagger}^2 dt : \dot{x}_t \in \mathcal{D}_{(t,x_t)},~ ~  x_{t_f}=x_f\right\},
\end{equation*}
where $\|v\|_{G^\dagger}^2 = v^\top G^\dagger v$. Therefore, the optimal affine control~\eqref{eq:intro_det_dyn}-\eqref{eq:intro_det_obj} is a dynamic ensemble control problem over a sub-Riemannian manifold $(\R^d,\mathcal{D},g^{SR})$, which seeks a feedback control law $u(t,x)$ steering $\mu_0$ to $\mu_f$ and the dynamics of the ensemble are constrained to the distribution $\mathcal{D}$ in~\eqref{eq: time_varying_distrubtion}. This problem has been studied in~\cite{elamvazhuthi2023dynamical,elamvazhuthi2024benamou}, where the authors established the existence of optimal feedback control using Young measures for general nonlinear control affine-systems~\cite{elamvazhuthi2023dynamical} and subsequently, as regular feedback control function under suitable absence-of-abnormality assumptions~\cite{elamvazhuthi2024benamou} leveraging results of \cite{agrachev2009optimal,figalli2010mass}. At times, we refer to problem~\eqref{eq:intro_det_dyn}-\eqref{eq:intro_det_obj} as an optimal transport problem over a sub-Riemannian manifold, because it induces a static optimal transport (OT) problem over a sub-Riemannian manifold
\begin{equation}\label{eq:OT-static}
 \inf_{\gamma\in\Pi(\mu_0,\mu_f)} \int_{\mathbb{R}^d\times\mathbb{R}^d} \frac12d_{SR}^2(x_0, x_f) , d\gamma(x_0,x_f),
\end{equation}
where
\begin{equation}\label{eq: transport_plans}
\Pi(\mu_0,\mu_f):=\{\gamma\in\mathcal P(\R^d\times\R^d) : \pi_{0\#}\gamma =\mu_0\quad\text{and}\quad \pi_{f\#}\gamma =\mu_f\}   
\end{equation}
is the set of couplings of $(\mu_0,\mu_f)$ and $\pi_0(x_0,x_f)=x_0$ and $\pi_f(x_0,x_f)=x_f$, for all $x_0,x_f\in\R^d$, are the projection maps. This static problem ~\eqref{eq:OT-static}-\eqref{eq: transport_plans}, which seeks the minimal cost coupling between $\mu_0$ and $\mu_f$, has been studied in~\cite{agrachev2009optimal,figalli2010mass}. %  for general nonholonomic systems and by~\cite{figalli2010mass}  for sub-Riemannian manifolds.} 
As stated in the literature~\cite{agrachev2009optimal,figalli2010mass,elamvazhuthi2023dynamical,elamvazhuthi2024benamou,rifford2012sub}, the core difficulty of OT over a sub-Riemannian manifold stems from the non-smooth geometry induced by the constraints: the control-to-state map can have singularities, so the regularity and uniqueness of minimizers may fail; even when a Monge map exists, it can be discontinuous or non-unique. The novelty in this work is to establish a framework that mitigates these difficulties.

Motivated by the regularizing effects of entropic penalization in optimal transport \cite{cuturi2013sinkhorn}, our contribution is of a different nature: we introduce an \emph{entropic regularization} of sub-Riemannian optimal transport. Specifically, we perturb~\eqref{eq:intro_det_dyn} by a small diffusion acting \emph{only along the horizontal distribution}:
\begin{equation}\label{eq:intro_sto_dyn}
dX_t^u = g(t,X_t^u) u(t,X_t^u) dt + \sqrt{\epsilon} g(t,X_t^u) dW_t,\qquad X_0^u\sim\mu_0,
\end{equation}
where $W_t$ is the standard $\R^m$-Brownian motion, with $\epsilon>0$, and consider the problem
\begin{equation}\label{eq:intro_sto_obj}
\inf_{u\in\mathcal U}\ \E\!\left[\int_0^{t_f}\frac12\|u(t,X_t^u)\|^2dt\right]
\quad\text{subject to}\quad X_{t_f}^u\sim\mu_f.
\end{equation}
This stochastic control problem is equivalent to the Schr\"odinger bridge problem (SBP)~\cite{schrodinger1931umkehrung}: among all path measures on $\Omega:=C([0,t_f];\R^d)$ that are absolutely continuous with respect to a reference law and match the endpoint marginals $(\mu_0,\mu_f)$, find the one that minimizes the relative entropy with respect the reference law. A difficulty immediately arises: the reference law, if it exists, is degenerate and the setup~\eqref{eq:intro_sto_dyn}-\eqref{eq:intro_sto_obj} is ill-posed. The associated Kolmogorov operator is not uniformly elliptic, and parabolic regularity theory in~\cite{dai1991stochastic,Leonard2012} does not apply directly.  To define a well-defined   Schr\"odinger system, one requires a strictly positive transition density function. Unlike the linear stochastic degenerate case~\cite{adu2022optimal,adu2024schrodinger,chen2021optimal,chen2016optimal} where the presence of drift is helpful, for a non-linear case, establishing the existence of a positive smooth transition density is delicate. While uniformly ellipticity ensures smooth and everywhere positive transition density function, these desirable properties are decoupled in the non-linear non-elliptic diffusion case. For smoothness, one typically imposes the H\"ormander condition which leads to hypoellipticity of the generator~\cite{Hormander1967}. Under H\"ormander, positivity is tied to submersion property of the endpoint map~\cite{leandre2005positivity,arous1991decroissance} (Bismut/submersion condition) rather than the Chow-Rashevskii controllability condition. The driftless system is the structural choice that ensures that Chow-Rashevskii controllability is equivalent to H\"ormander-Bismut/submersion condition~\cite{arous1991decroissance}. Aside the fact that our settings smoothens the geometric singularities present in the OT-SR settings, the SBP setup is strictly convex and hence acts as a  selection principle up a to subsequence among potentially many deterministic SR-optimal solutions. Moreover, the dynamic regularity formulation transforms the continuity equation associated to~\eqref{eq:intro_det_dyn} 
to a parabolic  associated to~\eqref{eq:intro_sto_dyn}. This adds a numerical novelty in the paradigm of OT-SR setup. Aside the fact that the hypoelliptic forward-backward characterization improves 
numerical stability, it yields a practical algorithmic pipeline that mirrors (and extends) the classical entropic optimal transport/Sinkhorn 
paradigm to the sub-Riemannian regime. We show that the Schr\"odinger system associated to our problem can be solved via a Sinkhorn/IPFP 
fixed-point iteration on the cone of positive functions. Finally, the hypoelliptic degeneracy introduces a genuine 
discretization challenge absent in uniformly elliptic settings: standard Euclidean 
finite-difference discretizations may fail to preserve positivity and mass. Our 
schemes therefore, discretize the generator and its adjoint in a manner that is consistent 
with the underlying sub-Riemannian geometry.

We state here that L\'eonard in~\cite{Leonard2012} studied the Schr\"odinger problem in a general Markovian-process framework and in particular, analyzed the diffusion-to-optimal-transport limit in the classical Brownian/elliptic setting. By contrast, our work treats a genuinely degenerate Markovian process, where the noise is \emph{anisotropic} and the zero-limit yields sub-Riemannian optimal transport.
%{\color{blue} We state here that convergence of entropic Schr\"odinger bridges to the deterministic Monge--Kantorovich problem has been studied systematically by L\'eonard in~\cite{Leonard2012} under the assumption that the reference diffusion is uniformly elliptic.} By contrast, our work introduces an \emph{anisotropic} entropic regularization: the noise acts only along the horizontal distribution, yielding a degenerate reference process whose law is supported on a null set for the full Wiener measure in~\cite{Leonard2012}. This horizontal regularization preserves the sub-Riemannian geometry and leads, in the zero-noise limit, to the sub-Riemannian optimal transport problem. %, removes the ellipticity hypothesis and establish $\Gamma$-convergence to the zero-noise limit on the path space.
%\lipsum[2-3]

% The outline is not required, but we show an example here.
The paper is organized as follows: In Section~\ref{sec: Stochastic Geometric Control Problem}, we discuss the well-posedness of the stochastic optimal control problem via Schr\"odinger bridge problem. In Section~\ref{sec:Eulerian} we derive the first-order condition of optimality in the Eulerian framework. In Section~\ref{sec: Limiting Section}, we show that the stochastic solution established in Section~\ref{sec:Eulerian} converges to the corresponding Eulerian solution in Section~\ref{sec: Stochastic Geometric Control Problem}. Section~\ref{sec: Numerical Method} is dedicated to numerical method and example.

\section{Stochastic Geometric Control Problem}\label{sec: Stochastic Geometric Control Problem}
We consider the problem of transporting an initial particle distribution $\mu_0$ to a final $\mu_f$ via a controlled dynamical system. The velocities of the ensemble of particles initially distributed according to $\mu_0$ are restricted to a prescribed subspace at every point. The objective is to achieve this transport while minimizing the total expected control effort, leading to an optimal transport problem with geometric constraints.

%\margin{$\mathcal{B}$ undefined} 
We now formalize this problem. Our aim is to investigate the optimal distributional control problem:
\begin{equation}\label{eq: expectation_total_control_effort}
\inf_{u} \E \left[ \int_0^{t_f} \frac12 \|u(t,  x_t)\|_2^2  dt\right]
\end{equation}
subject to
\begin{equation}\label{eq: Num_boundary condition}
dX_t=g(x_t)u(t,x_t)dt,\quad x_0\sim\mu_0\quad\text{and}\quad x_{t_f}\sim\mu_f.
\end{equation}
To simplify the notation, from this point on, we suppress the explicit $t$-dependence and write $g(x)$; the analysis remains identical for $g(t,x)$ as in~\eqref{eq:intro_det_dyn}. Thus, the problem is to find the minimum expected energy required to transport from  $\mu_0$  to  $\mu_f$, where $\mu_0$ and $\mu_f$ have compactly supported strictly positive densities $\rho_0$ and $\rho_f$, respectively, along admissible paths whose velocities lie in the distribution%~\eqref{eq: distribution}
\begin{equation}\label{eq: distribution}
\mathcal{D} := \operatorname{span}\{g_1, \dots, g_m\} \subset T\mathbb{R}^d.
\end{equation}
We emphasize that, the $\mathcal{D}$ above equipped with the metric  $g_x^{\text{SR}}(v,v) := v^\top G(x)^\dagger v$ for $v \in \mathcal{D}_{x}$, where $G^\dagger$ is the pseudoinverse of $G := g g^\top$ constitutes the sub-Riemannian structure $(\R^d,\mathcal{D},g^{SR})$ on the state space. Accoringly, we refer to~\eqref{eq: expectation_total_control_effort}-\eqref{eq: Num_boundary condition} optimal transport over sub-Riemannian manifold $(\R^d,\mathcal{D},g^{SR})$. Henceforth we denote this geometric structure simply by $(\R^d,\mathcal{D},g^{SR})$.

To ensure the problem is well-posed, we make the following two assumptions.
\begin{assumption}\label{ass:wellposed}
The coefficients $g_i \in C^\infty(\mathbb{R}^d; \mathbb{R}^d)$, where $i\in\{1,\dots.m\}$, has bounded derivatives of all order. 
\end{assumption}
The second is the following.
\begin{assumption}\label{ass:Controllability condition}
The vector fields ${g_1, \dots, g_m}$ satisfy the bracket-generating condition. Specifically, we assume that the Lie algebra generated by $\{g_1, \dots, g_m\}$ spans the tangent space $T_y\mathbb{R}^d$ at every point $y \in \mathbb{R}^d$:
\[
\operatorname{Lie}(g_1, \dots, g_m)(y) = \mathbb{R}^d \quad \text{ for all $y \in \mathbb{R}^d$.}
\]
\end{assumption}

We emphasize here that, all the analysis in this work is under the umbrella of Assumption~\ref{ass:wellposed} and~\ref{ass:Controllability condition}. 
Rather than attempting to solve~\eqref{eq: expectation_total_control_effort}-\eqref{eq: Num_boundary condition} directly as in \cite{agrachev2009optimal,figalli2010mass,elamvazhuthi2024benamou}, we first attempt to solve the relaxed problem: %stochastic optimal control problem
\begin{equation}\label{eq: objective}
\inf_{u \in \mathcal{U}} \E \left[ \int_0^{t_f} \frac12 \|u(t,  X_t^u)\|_2^2  dt  \right]
\end{equation}
subject to 
\begin{align}\label{controlled_dynamics}
&d X_t^u = (b_{\epsilon}(x_t^u) + g(X_t^u) u(t,  X_t^u))  dt+ \sqrt{\epsilon} g(X_t^u)  dW_t,\\
& X_0^u\sim\mu_0\quad\text{and}\quad  X_{t_f}^u\sim\mu_{t_f},\nonumber
\end{align}
where $b_{\epsilon} :=\frac\epsilon2\sum_{i=1}^{m}\nabla_{g_i}g_i$ %, the derivative of the vector field $ g_i$ in the direction of $ g_i$, 
is the Stratonovich correction drift ensuring that %the uncontrolled SDE 
\begin{equation}\label{uncontrolled_dynamic_constraint}
dX_t = b_{\epsilon}(X_t) dt + \sqrt\epsilon g(X_t) dW_t, \quad  X_0 \sim \mu_0
\end{equation}
naturally stays in the distribution~\eqref{eq: distribution}. %{. See Figure~\ref{fig:diffusion_comparison_three} for a demonstration.}\margin{Not sure this is the place to refer to this?} 
Here, $\{W_t\}_{t \ge 0}$ is the standard $\mathbb{R}^m$ Brownian motion,  
$\epsilon>0$ is the noise intensity, and
\[
\mathcal{U} := \left\{ \map{u}{[0, t_f] \times \mathbb{R}^d}{\mathbb{R}^m} \middle| \text{$u$ is adapted and } \mathbb{E} \int_0^{t_f} \|u(t,  X_t^u)\|^2 dt < \infty \right\}.
\]
Later we show that the solution for~\eqref{eq: expectation_total_control_effort}-\eqref{eq: Num_boundary condition} is obtained as a limiting process of the relaxed problem. Problem~\eqref{eq: objective}-\eqref{controlled_dynamics} is a Schr\"odinger bridge problem over a sub-Riemannian manifold $(\R^d,\mathcal{D},g^{SR})$. To make this document self-contained and also emphasize the novelty of our work in that paradigm, we state the derivation below.
\begin{figure}[t]
    \centering
    
    % Left plot: Horizontally Constrained
    \begin{subfigure}[b]{0.32\textwidth}
        \centering
        \includegraphics[width=\textwidth]{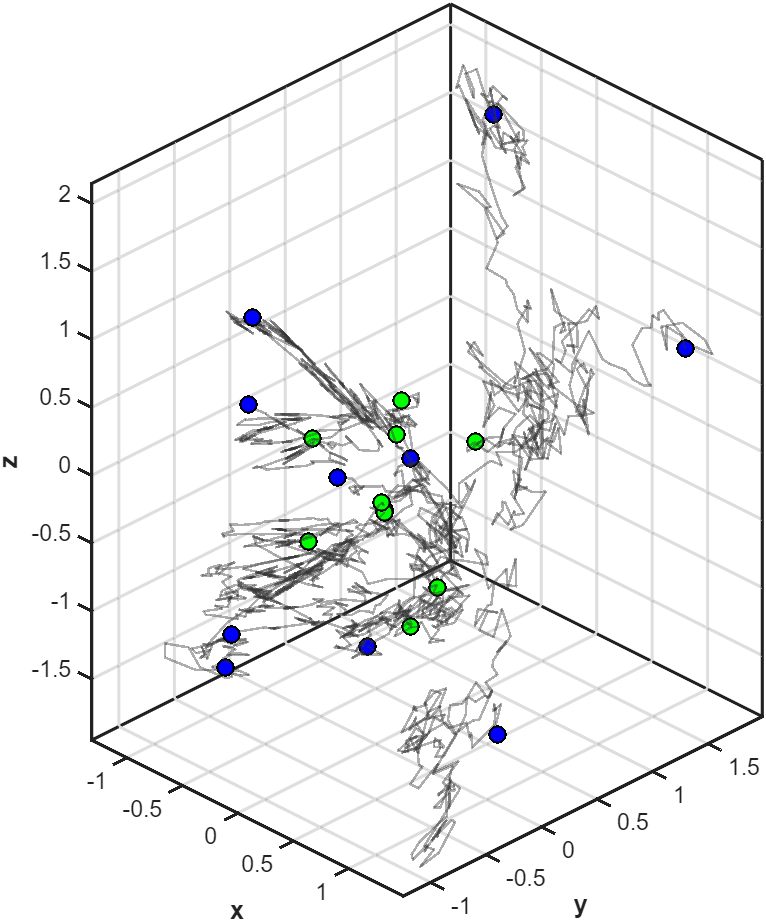}
        \caption{Horizontally constrained diffusion}
        \label{fig:constrained}
    \end{subfigure}
    \hfill
    % Middle plot: Purely Horizontal
    \begin{subfigure}[b]{0.32\textwidth}
        \centering
        \includegraphics[width=\textwidth]{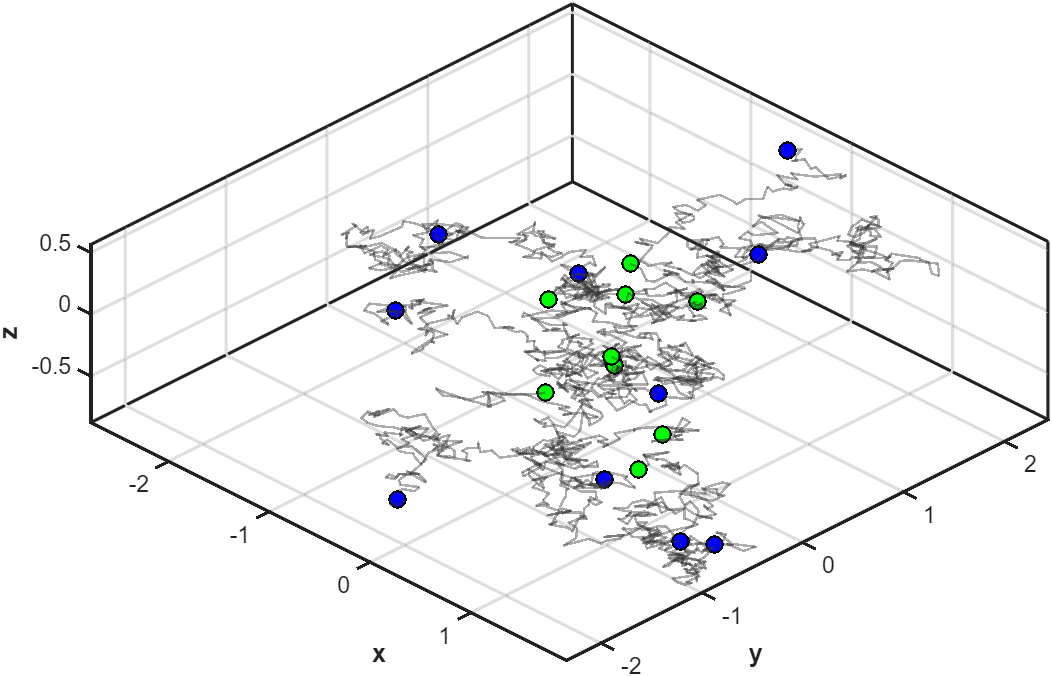}
        \caption{Purely horizontal diffusion}
        \label{fig:horizontal_only}
    \end{subfigure}
    \hfill
    % Right plot: Unconstrained Diffusion
    \begin{subfigure}[b]{0.32\textwidth}
        \centering
        \includegraphics[width=\textwidth]{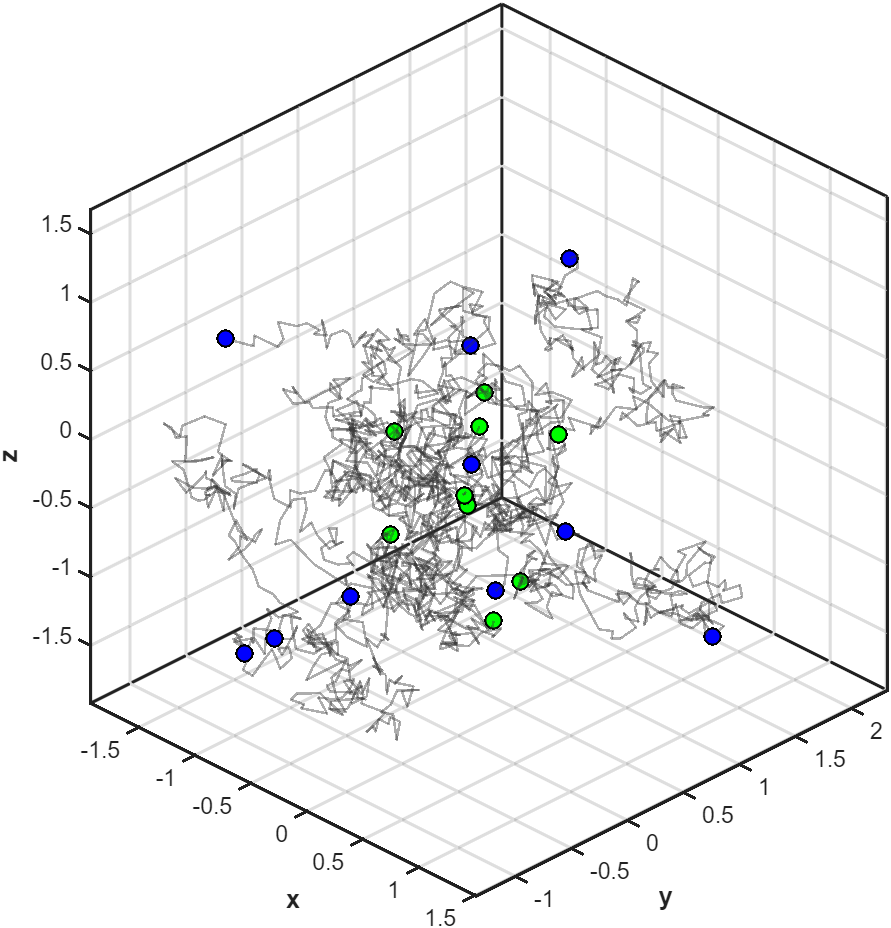}
        \caption{Isotropic diffusion}
        \label{fig:unconstrained}
    \end{subfigure}
    
    \caption{Plots~(a) and~(b) are anisotropic diffusion $dX_t=\sqrt{\epsilon} g(X_t) dW_t$ in $\mathbb{R}^3$ with $\epsilon=1$ as opposed to~(c) which is an isotropic diffusion $dX_t=\sqrt{\epsilon}dW_t$ with $\epsilon=1$ in $\mathbb{R}^3$. In all scenarios, the same $10$ initial samples from $X_0\sim\mu_0$ are used. The plot in~(a) is a horizontally constrained (hypoelliptic) diffusion of Heisenberg type, encoded by $g$ in Example~\ref{ex: unified_example}, where the noise acts only along the horizontal distribution and the vertical coordinate is generated indirectly through the noncommutative geometry. This creates a relatively flat pattern in the noise. The plot in~(b) corresponds to a process which is purely confined to horizontal planes (encoded by $g$).}
    \label{fig:diffusion_comparison_three}
\end{figure}

\subsection{Schr\"odinger Bridge Formulation}
Let $\Omega:=C([0,t_f];\mathbb R^d)$ be the canonical space  with the uniform topology (and endowed with the Borel $\sigma$-algebra) of the uncontrolled process $\{ X_t\}_{0\leq t}$ in~\eqref{uncontrolled_dynamic_constraint}. Note that under Assumption~\ref{ass:wellposed}, we have that~\eqref{uncontrolled_dynamic_constraint} admits a unique strong solution on $[0,t_f]$ (see e.g.,~\cite{oksendal2003stochastic}). Let $\mathrm{R}^{\epsilon,\mu_0}\in\mathcal P(\Omega)$, satisfying the pushforward \footnote{Given a measurable map $\map{T}{ (\Omega,\mathcal{F})}{(Y,\mathcal{G})}$ 
and a measure $\mu$ on $(\Omega,\mathcal{F})$, 
the \emph{pushforward measure} $T_\# \mu$ on $(Y,\mathcal{G})$ is defined by 
$T_\# \mu(B) := \mu(T^{-1}(B))$ for every $B \in \mathcal{G}$.} 
\begin{equation}\label{eq: law of unreflected process}
 ( \mathrm{ev}_0)_\# \mathrm{R}^{\epsilon,\mu_0} = \mu_0,   
\end{equation} 
 be the canonical law of~\eqref{uncontrolled_dynamic_constraint} on the sub-Riemannian manifold $(\R^d,\mathcal{D},g^{SR})$. Here $\map{\mathrm{ev}_t}{\Omega}{\R^d}$ is the evaluation $\mathrm{ev}_t(X)=X_t$, for all $X\in\Omega$ at time $t\geq 0$. For a given admissible control $u \in \mathcal{U}$, let $\mathrm{P}^u \in \mathcal{P}(\Omega)$ be the corresponding probability distribution of the process $\{ X_t^u\}_{0\leq t\leq t_f}\in \mathbb{R}^d$ in~\eqref{controlled_dynamics}. The Schr\"odinger bridge problem (SBP) is the infinite-dimensional optimization problem: 
\begin{equation}\label{sbp_precise}
\inf_{\mathrm{P}^u \in \mathcal C_{\mathrm{SB}}} \KL(\mathrm{P}^u \| \mathrm{R}^{\epsilon,\mu_0}):= \mathbb{E}_{\mathrm{P}^u} \Bigg[ \log \frac{d\mathrm{P}^u}{d\mathrm{R}^{\epsilon,\mu_0}} \Bigg] 
\end{equation}
where
\[
\mathcal C_{\mathrm{SB}}
:= \Big\{\mathrm{P}\ll\mathrm{R}^{\epsilon,\mu_0} :\ 
(\mathrm{ev}_0)_\#\mathrm{P}=\mu_0,\ (\mathrm{ev}_{t_f})_\#\mathrm{P}=\mu_f\Big\}.
\]

By the Girsanov transformation~\cite[Chapter~8.6]{oksendal2003stochastic},
\begin{equation}\label{rn_derivative_precise}
\frac{d\mathrm{P}^u}{d\mathrm{R}^{\epsilon,\mu_0}}\Big|_{\mathcal{F}_t} 
= \exp\Bigg( \int_0^t \frac{1}{\sqrt{\epsilon}} u(s,  X_s)^\top dW_s - \frac12 \int_0^t \frac{1}{\epsilon} \|u(s,  X_s)\|^2 ds \Bigg). 
\end{equation}
Therefore, under $\mathrm{P}^u$, the distribution of \eqref{controlled_dynamics},
\[
W_t^u := W_t - \int_0^t \frac{1}{\sqrt{\epsilon}} u(s,  X_s) ds
\]
is a Brownian motion. 
Finally, using~\eqref{rn_derivative_precise}, the objective functional in~\eqref{sbp_precise} simplifies to the quadratic control cost:
\begin{equation}\label{kl_control_cost}
\epsilon\KL(\mathrm{P}^u \| \mathrm{R}^{\epsilon,\mu_0}) = \mathbb{E}_{\mathrm{P}^u} \left[ \frac12 \int_0^{t_f} \|u(t,  X_t)\|^2 dt \right],
\end{equation}
showing that the Schr\"odinger bridge problem (SBP)~\eqref{sbp_precise} is equivalent to the stochastic control problem~\eqref{eq: objective}-\eqref{controlled_dynamics} on $(\R^d,\mathcal{D},g^{SR})$ with given initial and terminal distributions. Furthermore, from~\eqref{kl_control_cost}, we have that finite KL-divergence is equivalent to finite expected drift energy. Minimizing $\KL$ in~\eqref{sbp_precise} is equivalent to finding the minimal-energy control~\eqref{eq: objective}-\eqref{controlled_dynamics} to steer $\mu_0$ to $\mu_f$.  

The SBP  over a sub-Riemannian manifold $(\R^d,\mathcal{D},g^{SR})$ %\margin{we keep saying sub-R manifold, but we never say what we mean by this. We either should state what this means, or just say subject to the dynamics, which obviously is non-holonomic}
in~\eqref{sbp_precise} which serves as a stochastic counterpart to~\eqref{eq: expectation_total_control_effort}--\eqref{eq: Num_boundary condition} deviates from the literature. More precisely, in the classical setting (e.g. see~\cite{dai1991stochastic}) the reference process $dX_t = b(X_t)dt + g(X_t)dW_t$ has a uniformly elliptic (ue) diffusion matrix 
\[\xi^\top G(x) \xi \ge \lambda \|\xi\|^2, \quad\text{for any $\xi,x\in\R^d$} 
\]
where $G=gg^\top$ and for some constant $\lambda>0$. This is the situation in Figure~\ref{fig:diffusion_comparison_three}(c). Consequently its law $\mathrm{R}_{\mathrm{ue}}^{\epsilon}\in\mathcal P(\Omega)$ has full support on $\Omega$ and possesses a strictly positive smooth density. In contrast, $\mathrm{R}^{\epsilon,\mu_0}$ in~\eqref{eq: law of unreflected process} can be supported on the thin subset $\{X \in \Omega : dX_t \in \mathcal{D}_{X_t} \text{ for a.e. } t\}$, a proper, null-measure set for $\mathcal{R}^{\epsilon}_{ue}$, reflecting the loss of ellipticity. This is the situation in Figure~\ref{fig:diffusion_comparison_three}(b) and the SBP formulated in~\eqref{sbp_precise} will be ill-posed.  We show that the driftless characterization in~\eqref{uncontrolled_dynamic_constraint} is right setup to guarantee that the reference path space measure $\mathrm{R}^{\epsilon,\mu_0}$ has full support and admit a strictly positive transition density, which is necessary to characterize the solution of the SBP problem in~\eqref{sbp_precise}. This is the situation in Figure~\ref{fig:diffusion_comparison_three}(a).

To establish the existence of a unique solution of~\eqref{sbp_precise}, the following result will be useful.
%\margin{I cannot figure this out, but the lemma environment is giving us an error. Why?}
\begin{lemma} The infinitesimal generator 
\begin{equation}\label{eq:Ito-generator}
\mathcal L_\epsilon 
= b_\epsilon\cdot\nabla_x +\frac{\epsilon}{2}\mathrm{tr}\!\big(G\nabla_x^2\big)
\end{equation}
associated to~\eqref{uncontrolled_dynamic_constraint} can be expressed as the sum of square 
\begin{equation}\label{eq:sum-of-squares}
  \mathcal L_{\epsilon}= \tfrac12\sum_{i=1}^m V_i^2.
\end{equation}
where
\begin{equation}\label{eq: trans_vec_field}
V_i:=\sqrt{\epsilon}  g_i\cdot\nabla_x,\quad i=1,\dots,m.
\end{equation}
\end{lemma}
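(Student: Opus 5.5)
The plan is a direct computation: expand $\tfrac12\sum_{i=1}^m V_i^2$ acting on a test function $f\in C^\infty(\R^d)$ and compare term by term with the generator \eqref{eq:Ito-generator}. First, I would confirm that \eqref{eq:Ito-generator} is the Itô generator of \eqref{uncontrolled_dynamic_constraint}. The diffusion coefficient is $\sqrt\epsilon\, g$, so the second-order part is $\frac{\epsilon}{2}\mathrm{tr}(gg^\top\nabla^2 f)=\frac\epsilon2\mathrm{tr}(G\nabla^2 f)$, and the drift is $b_\epsilon$. This follows from Itô's formula, which applies under Assumption~\ref{ass:wellposed}.

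Next, in coordinates $V_i f=\sqrt\epsilon\sum_k g_i^k\partial_k f$, and I apply $V_i$ once more. By the product rule,
\[
V_i^2 f=\epsilon\sum_{k,l} g_i^l g_i^k\,\partial_l\partial_k f+\epsilon\sum_{k,l} g_i^l(\partial_l g_i^k)\,\partial_k f .
\]
The first sum is $\epsilon\, g_i^\top(\nabla^2 f)g_i=\epsilon\,\mathrm{tr}(g_ig_i^\top\nabla^2 f)$. Summing over $i$ and using $G=gg^\top=\sum_i g_ig_i^\top$ gives $\epsilon\,\mathrm{tr}(G\nabla^2 f)$. The second sum is $\epsilon\,(\nabla_{g_i}g_i)\cdot\nabla f$, where $\nabla_{g_i}g_i=(Dg_i)g_i$ is the directional derivative of $g_i$ along itself. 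Multiplying by $\tfrac12$ and summing, the first-order part is $\frac\epsilon2\sum_i\nabla_{g_i}g_i\cdot\nabla f=b_\epsilon\cdot\nabla f$. This matches the definition of $b_\epsilon$ exactly, which is why it was chosen as the Stratonovich correction.

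The only point needing care is the meaning of $\nabla_{g_i}g_i$. It must be read as the Euclidean directional derivative $(Dg_i)g_i$, i.e.\ the flat connection on $\R^d$, and not a Levi-Civita connection of $g^{SR}$, which is not defined on the full tangent bundle. With that reading, the identity holds pointwise for smooth $f$ and hence as differential operators on $C^\infty$. Equivalently, \eqref{uncontrolled_dynamic_constraint} is the Stratonovich SDE $dX_t=\sqrt\epsilon\sum_i g_i(X_t)\circ dW^i_t$, whose generator is by definition $\tfrac12\sum_i V_i^2$. I would mention this as a cross-check, but the coordinate computation above suffices.
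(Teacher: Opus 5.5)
Your proposal is correct and follows essentially the same route as the paper: a direct coordinate expansion of $V_i^2$ by the product rule, with the second-order part identified as $\frac{\epsilon}{2}\mathrm{tr}(G\nabla^2)$ via $G=\sum_i g_i g_i^\top$ and the first-order part identified as $b_\epsilon\cdot\nabla$ via $\nabla_{g_i}g_i=(Dg_i)g_i$. Your remark that $\nabla_{g_i}g_i$ must be read as the flat Euclidean directional derivative is exactly the reading the paper's computation implicitly uses.
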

\begin{proof}
For a smooth test function $u=u(t,x)$, we have that
\[
V_i u
= \sqrt{\epsilon}\sum_{k=1}^d g^{k}_i\partial_{ x_k}u.
\]
Applying $V_i$ a second time and using the product rule,
\[
\begin{aligned}
V_i^2 u
&= V_i\left( \sqrt{\epsilon}\sum_{\ell=1}^d g^{\ell}_i\partial_{ x_\ell} u \right)
= \sqrt{\epsilon}\sum_{k=1}^d g^{k}_i\partial_{ x_k}
\left( \sqrt{\epsilon}\sum_{\ell=1}^d g^{\ell}_i\partial_{ x_\ell} u \right)\\
&= \epsilon\sum_{k,\ell=1}^d g^{k}_i(\partial_{ x_k} g^{\ell}_i)\partial_{ x_\ell} u+\epsilon\sum_{k,\ell=1}^d g^{k}_ig^{\ell}_i\partial_{ x_k  x_\ell}^2 u.
\end{aligned}
\]
Summing over $i=1,\dots,m$ and multiplying by $\tfrac12$ gives
\[
\frac12\sum_{i=1}^m V_i^2 u
= \frac{\epsilon}{2}\sum_{k,\ell=1}^d
\Big(\sum_{i=1}^m g^{k}_i g^{\ell}_i\Big)\partial_{ x_k  x_\ell}^2 u
+\frac{\epsilon}{2}\sum_{\ell=1}^d
\Big(\sum_{i=1}^m\sum_{k=1}^d g^{k}_i\partial_{ x_k} g^{\ell}_i\Big)\partial_{ x_\ell} u.
\]
Recognizing the diffusion matrix $G:=g g^\top$ with entries
$G_{k\ell}=\sum_{i=1}^m g^{k}_i g^{\ell}_i$,
we can rewrite the previous identity compactly as
\begin{equation}\label{eq:sum-of-squares-expanded}
\frac12\sum_{i=1}^m V_i^2 u
=\frac{\epsilon}{2}\mathrm{tr}\big(G\nabla_x^2 u\big)
+\frac{\epsilon}{2}\Big(\sum_{i=1}^m \nabla_{g_i}g_i\Big)\cdot\nabla_x u=\mathcal Lu.
\end{equation} 
\end{proof}

\begin{proposition}\label{lem:kernel}
The reference measure $\mathrm{R}^{\epsilon,\mu_0}\in\mathcal P(\Omega)$ in~\eqref{eq: law of unreflected process} associated to the uncontrolled SDE in~\eqref{uncontrolled_dynamic_constraint} 
admits a smooth positive transition density function. 
\end{proposition}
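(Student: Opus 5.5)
The proof splits into smoothness and strict positivity, since for degenerate diffusions these come from different mechanisms.

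\emph{Smoothness.} By the preceding Lemma, the generator is $\mathcal L_\epsilon=\tfrac12\sum_{i=1}^m V_i^2$ with $V_i=\sqrt{\epsilon}\,g_i\cdot\nabla_x$. This is a H\"ormander sum of squares with no drift vector field. Since $\mathrm{Lie}(V_1,\dots,V_m)=\sqrt{\epsilon}\,\mathrm{Lie}(g_1,\dots,g_m)$ up to scaling of brackets, Assumption~\ref{ass:Controllability condition} gives the parabolic H\"ormander condition for $\partial_t-\mathcal L_\epsilon$ at every point.

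By H\"ormander's theorem~\cite{Hormander1967}, $\partial_t-\mathcal L_\epsilon$ and its adjoint are hypoelliptic. The transition kernel $P_t(x,dy)$ of~\eqref{uncontrolled_dynamic_constraint} solves $(\partial_t-\mathcal L_\epsilon^{x})P=0$ and $(\partial_t-(\mathcal L_\epsilon^{y})^*)P=0$ in the distributional sense on $(0,t_f]\times\R^d\times\R^d$, so it has a density $p^\epsilon_t(x,y)\in C^\infty((0,t_f]\times\R^d\times\R^d)$.

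Because the domain is all of $\R^d$ and not compact, I would obtain global integrability and smoothness in $(x,y)$ probabilistically instead. Assumption~\ref{ass:wellposed} (smooth coefficients with bounded derivatives of all orders) is exactly the setting of Malliavin calculus. There the Malliavin covariance matrix is invertible with inverse moments of all orders under the (uniform) H\"ormander condition, which yields a $C^\infty$ density; I would cite Kusuoka--Stroock/Nualart for this. Writing $\mathrm{R}^{\epsilon,\mu_0}$ as $\int \mathrm{R}^{\epsilon,x}\,\mu_0(dx)$, the Markov property then gives its finite-dimensional marginals in terms of $p^\epsilon$.

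\emph{Positivity.} This is the main obstacle. Hypoellipticity alone does not give positivity, so I would use the support theorem together with the submersion (Bismut) criterion of~\cite{leandre2005positivity,arous1991decroissance}: $p^\epsilon_t(x,y)>0$ if there is a control $h\in L^2([0,t];\R^m)$ such that the solution of the skeleton ODE
\[
\dot\phi_s=\sqrt{\epsilon}\,g(\phi_s)h_s
\]
(Stratonovich form, which here is exactly the driftless system because $b_\epsilon$ is the It\^o--Stratonovich correction) satisfies $\phi_0=x$, $\phi_t=y$, and the endpoint map $h\mapsto\phi_t$ is a submersion at $h$.

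Reachability of any $y$ from $x$ in any time $t$ follows from Chow--Rashevskii, using Assumption~\ref{ass:Controllability condition} and time-rescaling of the driftless system. For the submersion property I would rely on the driftless structure, following Ben Arous--L\'eandre: for driftless bracket-generating systems the set of regular (non-singular) controls steering $x$ to $y$ is nonempty. The concrete argument is as follows. Take a control that reaches $y$. Concatenate it with a short loop at $y$ whose endpoint differential is surjective; such a loop exists by a standard open-mapping argument, namely that the endpoint map restricted to piecewise-constant controls has full-rank differential somewhere near zero length under the bracket condition. Then compensate the loop by reparametrizing time.

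The Ben Arous--L\'eandre positivity theorem then gives $p^\epsilon_t(x,y)>0$ for all $t>0$ and all $x,y$. Strict positivity of the full-support path measure follows from the Stroock--Varadhan support theorem.
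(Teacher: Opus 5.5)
Your proposal is correct and follows essentially the same route as the paper. You use H\"ormander's theorem on the sum-of-squares form $\mathcal L_\epsilon=\tfrac12\sum_i V_i^2$ for smoothness, and the submersion criterion of L\'eandre and Ben Arous--L\'eandre for positivity, with the driftless structure guaranteeing that every point is reached by a submersive control. The paper simply cites that last fact, whereas you sketch it by concatenation; to close your loop you need the time-reversed control $s\mapsto -h(T-s)$, which is available precisely because there is no drift. The Malliavin detour is unnecessary: hypoellipticity is local, so non-compactness of $\R^d$ causes no trouble, and in any case only the pointwise H\"ormander condition of Assumption~\ref{ass:Controllability condition} is available, not a uniform one.
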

\begin{proof}
Using the Lie-algebra isomorphism $\Phi:v\mapsto v\cdot\nabla$, we have that
\[
\mathrm{Lie}(g_1,\dots,g_m)(x)= \mathrm{Lie}(V_1,\dots,V_m)(x)
\qquad \forall x\in\mathbb{R}^d,
\]
where $V_i$ are defined in~\eqref{eq: trans_vec_field}. Therefore, under Assumption~\eqref{ass:Controllability condition} and following from~\cite[Theorem~1.1]{Hormander1967} , we have that the operator $\partial_t-\mathcal L_{\epsilon}$, where $\mathcal L_{\epsilon}$ is in~\eqref{eq:sum-of-squares} is hypoelliptic. This implies that the PDE
\[
\partial_tu=\mathcal L_{\epsilon}^*u,\qquad  \forall s<t\qquad\text{and}\quad u(s)=\delta_{x},
\]
where $\delta_{x}$ is the Dirac distribution, admits a smooth transition density $p_{t-s,\epsilon}(x,y)$, for all $(x,y)\in\mathbb{R}^d\times\mathbb{R}^d$ and $t\geq s\geq 0$.   In our driftless case in~\eqref{eq:sum-of-squares}, controllable points can be reached using submersive controls~\cite[pp.~397]{arous1991decroissance} (controls that ensure the end-point map is a submersion). Hence under Assumption~\ref{ass:Controllability condition}, it follows that $p_{t,\epsilon}(x,y) > 0$ for all $x,y\in\R^d$ (see~\cite[Main Theorem]{leandre2005positivity}). 
%or~\cite{arous1991decroissance}.
\end{proof} 
Now that feasibility of~\eqref{sbp_precise} has been established, we proceed  to establish well-posedness of the SBP in~\eqref{eq: objective}-\eqref{controlled_dynamics} under degeneracy of the diffusion coefficient.

\begin{theorem}\label{thm:properness}
The control problem~\eqref{eq: objective}-\eqref{controlled_dynamics} is proper: there exists an admissible $u$ that steers~\eqref{controlled_dynamics} from $ X_0^u\sim\mu_0$ to $ X_{t_f}^u\sim\mu_{t_f}$ and has finite energy in~\eqref{eq: objective}.% whenever $\rho_0,\rho_f$ $\int_{\R^d}\rho_f(y)\log(\rho_f(y))dy<\infty$.
\end{theorem}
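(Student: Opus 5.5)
Via the identity~\eqref{kl_control_cost}, it suffices to exhibit a path measure $\mathrm{P}\in\mathcal C_{\mathrm{SB}}$ with $\KL(\mathrm{P}\|\mathrm{R}^{\epsilon,\mu_0})<\infty$, and then to read off from $\mathrm{P}$ a finite-energy control $u$. The natural candidate is the \emph{product-coupling bridge}: take the reference measure $\mathrm{R}^{\epsilon,\mu_0}$, disintegrate it with respect to its endpoint pair, and reweight only the terminal endpoint. Concretely, set
\[
\frac{d\mathrm{P}}{d\mathrm{R}^{\epsilon,\mu_0}}(X) := h(X_0,X_{t_f}), \qquad h(x,y):=\frac{\rho_f(y)}{p_{t_f,\epsilon}(x,y)},
\]
where $p_{t_f,\epsilon}$ is the smooth, strictly positive transition density from Proposition~\ref{lem:kernel}. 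The joint law of $(X_0,X_{t_f})$ under $\mathrm{R}^{\epsilon,\mu_0}$ is $\rho_0(x)p_{t_f,\epsilon}(x,y)\,dx\,dy$. Hence under $\mathrm{P}$ it is $\rho_0(x)\rho_f(y)\,dx\,dy$, so $\mathrm{P}$ has the correct marginals $\mu_0,\mu_f$ and is a probability measure. Strict positivity of $p_{t_f,\epsilon}$ is exactly what makes $h$ well defined everywhere and gives $\mathrm{P}\ll\mathrm{R}^{\epsilon,\mu_0}$. This is where the bracket-generating hypothesis is used.

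Next I would bound the entropy. Since the density depends only on the endpoints,
\[
\KL(\mathrm{P}\|\mathrm{R}^{\epsilon,\mu_0})=\int\!\!\int \rho_0(x)\rho_f(y)\big[\log\rho_f(y)-\log p_{t_f,\epsilon}(x,y)\big]\,dx\,dy .
\]
The first term is the entropy of $\rho_f$. It is finite because $\rho_f$ is a compactly supported density, which I will assume bounded; if it is not, this is exactly the finite-entropy assumption hinted at in the commented remark. For the second term, $p_{t_f,\epsilon}$ is continuous and strictly positive, so it attains a positive minimum on the compact set $\mathrm{supp}\,\rho_0\times\mathrm{supp}\,\rho_f$. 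Thus $-\log p_{t_f,\epsilon}$ is bounded there and the integral is finite. No quantitative Gaussian-type lower bound on the hypoelliptic kernel is needed; only continuity and positivity are used, and compact support of the marginals is used crucially.

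Finally, I would recover the control, which I expect to be the main obstacle. I need that every $\mathrm{P}\ll\mathrm{R}^{\epsilon,\mu_0}$ of finite entropy is the law of~\eqref{controlled_dynamics} for some $u\in\mathcal U$. The plan is to use the martingale representation of the density process $Z_t=\E_{\mathrm{R}}[h(X_0,X_{t_f})\mid\mathcal F_t]$, viewed with respect to the driving Brownian motion $W$: write $dZ_t=Z_t\,\epsilon^{-1/2}\beta_t^\top dW_t$. Girsanov's theorem~\eqref{rn_derivative_precise} then shows that under $\mathrm{P}$ the process solves~\eqref{controlled_dynamics} with drift $g\beta_t$. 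Finite entropy then gives $\E_{\mathrm{P}}\int_0^{t_f}\|\beta_t\|^2dt=2\epsilon\KL<\infty$ (Föllmer's argument). The subtlety is that the diffusion coefficient $g$ is degenerate, so representation with respect to $W$, rather than the canonical filtration of $X$, must be done on the strong-solution filtration; this is available by Assumption~\ref{ass:wellposed}.

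To obtain a Markov feedback $u(t,x)$ rather than an adapted process, I would either allow adapted controls, as in the definition of $\mathcal U$, or note that $\mathrm{P}$ is a mixture over $x_0$ of $h$-transforms. For these, $Z_t=\int\rho_f(y)p_{t_f-t,\epsilon}(X_t,y)/p_{t_f,\epsilon}(X_0,y)\,dy$, which yields $u=\epsilon\,g^\top\nabla_x\log(\cdots)$, smooth for $t<t_f$ by hypoellipticity. Using~\eqref{kl_control_cost}, the energy of this $u$ equals $\epsilon\KL(\mathrm{P}\|\mathrm{R}^{\epsilon,\mu_0})<\infty$, which proves properness.
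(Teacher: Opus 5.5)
Your entropy estimate is correct, and it is the same bound the paper relies on. The quantity in \eqref{eq: finite_energy_bound} is $\KL(\mu_0\otimes\mu_f\|r)$, which is exactly $\KL(\mathrm P\|\mathrm R^{\epsilon,\mu_0})$ for your product-coupling bridge. Your caveat that compact support alone does not make $\int\rho_f\log\rho_f$ finite is also well taken; the paper glosses over it.

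The gap is in the last step, the one you flag yourself. In \eqref{controlled_dynamics} the control enters as a feedback $u(t,X^u_t)$ with $u:[0,t_f]\times\R^d\to\R^m$, so ``admissible'' means a Markov feedback. Your density $h(X_0,X_{t_f})=\rho_f(X_{t_f})/p_{t_f,\epsilon}(X_0,X_{t_f})$ is not of product form, so $\mathrm P$ is not Markov. The drift you extract is $\epsilon\, g^\top(X_t)\nabla_x\log\psi(t,X_t;X_0)$ with $\psi(t,x;x_0)=\int\rho_f(y)\,p_{t_f-t,\epsilon}(x,y)/p_{t_f,\epsilon}(x_0,y)\,dy$, and this depends on $X_0$. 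Observing that $\mathrm P$ is a mixture over $x_0$ of $h$-transforms does not remove that dependence. Reading ``adapted'' as allowing path-dependent controls changes the statement rather than proving it.

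There are two standard ways to close this.

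\emph{The paper's route.} Use Fortet's theorem to obtain Schr\"odinger potentials with $\phi_0(x)(\mathscr P_{0,t_f}\phi_f)(x)=\rho_0(x)$ and $(\mathscr P^*_{0,t_f}\phi_0)(y)\phi_f(y)=\rho_f(y)$, and reweight by the \emph{product} $\phi_0(X_0)\phi_f(X_{t_f})$. The product structure together with the Markov property of the reference makes the bridge a Doob $h$-transform with feedback $u=\epsilon g^\top\nabla\log\varphi(t,x)$. Its endpoint coupling has $\log$-density $\log\phi_0\oplus\log\phi_f$ relative to $r$. Given integrability of $\log\phi_0$ and $\log\phi_f$, it therefore minimizes $\KL(\cdot\|r)$ over $\Pi(\mu_0,\mu_f)$, so its energy is bounded by your $\KL(\mu_0\otimes\mu_f\|r)$.

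\emph{Markovian projection of your $\mathrm P$.} Keep your $\mathrm P$ and set $\bar u(t,x):=\E_{\mathrm P}[u_t\mid X_t=x]$. The time marginals of $\mathrm P$ solve the Fokker--Planck equation with drift $b_\epsilon+g\bar u$, and Jensen gives $\int\!\!\int\|\bar u\|^2\rho_t\le\E_{\mathrm P}\int\|u_t\|^2dt$. A superposition principle valid for degenerate diffusion matrices (Figalli, Trevisan; linear growth of $g$ supplies the needed integrability) then yields a weak solution of \eqref{controlled_dynamics} with feedback $\bar u$ and the prescribed endpoint laws.

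Your approach has the merit of not needing the existence theory for the Schr\"odinger system at all. The paper's approach delivers a Markov control in closed form, but leans on Fortet's theorem.
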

\begin{proof}
We construct an admissible flow for~\eqref{controlled_dynamics} using Schr\"odinger bridge techniques. Let $\{ X_t\}_{0\leq t\leq t_f}$ satisfy the SDE in~\eqref{uncontrolled_dynamic_constraint} and $\mathcal F_{t}=\sigma( X_t)$ be the \emph{canonical filtration}, for all $t\in[0,t_f]$. 

% By strict positivity of the transition density function $p$ in~\eqref{eq:unref-rho0toTf},
For admissibility, from Proposition~\ref{lem:kernel}, following~\cite{Fortet1940}, there exist strictly positive functions $\map{\phi_0,\phi_f}{\mathbb R^d}{(0,\infty)}$ that satisfies  the Schr\"odinger system
\begin{equation}\label{eq:Sch-multplication_system}
\phi_0(x)(\mathscr P_{0,t_f} \phi_f)(x)=\rho_0(x)\qquad\text{and}\qquad (\mathscr P_{0,t_f}^* \phi_0)(y)\phi_f(y)=\rho_f(y),
\end{equation}
where 
\[
(\mathscr P_{s,t}\phi_f)(x):=\int \phi_f(y)p_{t-s,\epsilon}(x,y)dy\quad\text{and}\quad (\mathscr P_{s,t}^* \phi_0)(y):=\int \phi_0(x)p_{t-s,\epsilon}(x,y)dx.
\]
Define the measure on $\mathbb{R}^d\times \mathbb{R}^d$ as
\begin{equation}\label{eq: abs_distribution}
\frac{d\mathrm{Q}}{d\mathrm{R}^{\epsilon,\mu_0}}\Big|_{\mathcal F_{t_f}}:=\phi_0( x_0)\phi_f( x_{t_f}).    
\end{equation}
Then, using~\eqref{eq:Sch-multplication_system} and since
\[
\int_{(\mathbb{R}^d)^2}d\mathrm{Q}=\mathbb E_{\mathrm{R}^{\epsilon,\mu_0}}[\phi_0( x_0)\phi_f( x_{t_f})]
=\int_{\mathbb{R}^d} \phi_0(x)(\mathscr P_{0,t_f} \phi_f)(x)dx
=\int_{\mathbb{R}^d} \rho_0(x)dx=1,
\]
we have that $\mathrm{Q}$  is a probability measure. Furthermore, using~\eqref{eq:Sch-multplication_system}, we have that
\begin{equation*}
 \mathbb E_{\mathrm{Q}}[\psi( x_0)]
=\mathbb E_{\mathrm{R}^{\epsilon,\mu_0}} \big[\psi( x_0)\phi_0( x_0)\phi_f( x_{t_f})\big]
=\int \psi(x)\phi_0(x)(\mathscr P_{0,t_f} \phi_f)(x)dx =\int \psi(x)\rho_0(x)dx   
\end{equation*}
and
\begin{equation*}
\mathbb E_{\mathrm{Q}}[\psi( x_{t_f})]
=\mathbb E_{\mathrm{R}^{\epsilon,\mu_0}} \big[\psi( x_{t_f})\phi_0( x_0)\phi_f( x_{t_f})\big]
 =\int \psi(y)(\mathscr P_{0,t_f}^* \phi_0)(y)\phi_f(y)dy
 =\int \psi(y)\rho_f(y)dy    
\end{equation*}
hold for all bounded $\psi$. Therefore, 
\begin{equation}\label{eq: marginal_constraint}
( \mathrm{ev}_0)_{\#}\mathrm{Q}=\mu_0\qquad\text{and}\qquad ( \mathrm{ev}_{t_f})_{\#}\mathrm{Q}=\mu_{t_f}.    
\end{equation}
Restricting~\eqref{eq: abs_distribution} to a smaller $\sigma$-algebra $\mathcal F_{t}=\sigma( X_t)$ we have that 
\begin{equation}\label{eq:cond-step}
\frac{d\mathrm{Q}}{d\mathrm{R}^{\epsilon,\mu_0}}\Big|_{\mathcal F_{t}}
= \mathbb E_{\mathrm{R}^{\epsilon,\mu_0}} \big[\phi_0( x_0)\phi_f( x_{t_f})\ \big|\  X_t\big].   
\end{equation}
%(see, e.g., \cite[Th.\ 5.2]{Kallenberg2002} or the exercises around conditional expectation in \cite{Durrett2019,Billingsley1995,Williams1991}).
Using the Markov property (under $\mathrm{R}^{\epsilon,\mu_0}$, the past and future are conditionally
independent given $X_t$), the conditional expectation in \eqref{eq:cond-step} factorizes to
\[
\mathbb E_{\mathrm{R}^{\epsilon,\mu_0}} \big[\phi_0( x_0)\phi_f( x_{t_f})\ \big|\  X_t\big]
= \widehat\varphi(t, X_t)\varphi(t, X_t)
\]
where
\begin{equation}\label{eq: conditional expectation}
%\begin{aligned}
\widehat\varphi(t,x):=\mathbb E_{\mathrm{R}^{\epsilon,\mu_0}} \big[\phi_0( x_0)\mid  X_t=x\big] \quad\text{and}\quad%=(\mathscr P_{0,t}^* \phi_0)(x),\\[3pt]
\varphi(t,x):=\mathbb E_{\mathrm{R}^{\epsilon,\mu_0}} \big[\phi_f( x_{t_f})\mid  X_t=x\big].%=(\mathscr P_{t,t_f}\phi_f)(x).
%\end{aligned}    
\end{equation}
Therefore, the density process of~\eqref{eq: abs_distribution} is 
\[
Z_t=\frac{d\mathrm{Q}}{d\mathrm{R}^{\epsilon,\mu_0}}\Big|_{\mathcal F_t}=\widehat\varphi(t, X_t)\varphi(t, X_t).
\]
By the Doob/L\'eonard generalized $h$-transform~ \cite{Doob1984,Leonard2011StochasticH,Leonard2014Survey}, we have that $\mathrm{Q}$ is Markov with generator
\[
\mathcal L_\epsilon^{\mathrm{Q}} f=b_\epsilon\cdot\nabla f+\epsilon G\nabla\log\varphi\cdot\nabla f+\tfrac{\epsilon}{2}\mathrm{tr}(G\nabla^2 f).
\]
Equivalently, we have that $\mathrm{Q}$ is the distribution of the process
\begin{equation}\label{eq:unref-tiltedSDE}
d x_t=(b_{\epsilon}(X_t)+\epsilon  G(X_t)  \nabla\log\varphi(t, X_t)) dt+\sqrt{\epsilon}  g(X_t)  dW_t^{\mathrm{Q}}
\end{equation}
that satisfies the marginal constraint in~\eqref{eq: marginal_constraint}.
One can check that the above is the controlled SDE in~\eqref{controlled_dynamics} with feedback
\begin{equation}\label{eq: admissible control}
u(t,x)=\epsilon  g(x)^\top\nabla\log\varphi(t,x),
\end{equation}
which concludes admissibility.

Consider the reference endpoint coupling,
\[
r(dx,dy)=(\mathrm{ev}_0,\mathrm{ev}_{t_f})_{\#}\mathrm{R}^{\epsilon,\mu_0}=p_{t_f,\epsilon}(x,y)\rho_0(x)dxdy.%K(x,dy)\mu_0(dx),
\]
%where $K(x,dy)=p_{t_f,\epsilon}(x,y)dy$ is the final time transition. 
By disintegration of the relative entropy, we have that 
\begin{equation}\label{eq:KL-disintegration}
\KL(\mathrm{Q} \| \mathrm{R}^{\epsilon,\mu_0})
\;=\;
\KL(\pi \| r)
\;+\;
\int_{\R^d\times\R^d}
\KL\!\big(\mathrm{Q}^{x,y} \| \R^{\epsilon,x,y}\big) \pi(dx,dy),
\end{equation}
where $\pi\in \Pi(\mu_0,\mu_f)$ in~\eqref{eq: transport_plans} and $\mathrm{R}^{\epsilon,x,y}( \cdot )=\mathrm{R}^{\epsilon,\mu_0}( \cdot \mid \mathrm{ev}_0(\cdot)=x,\mathrm{ev}_{t_f}(\cdot)=y)$ is the bridge of~\eqref{uncontrolled_dynamic_constraint} and similarly for $\mathrm{Q}^{x,y}$ in~\eqref{eq:unref-tiltedSDE}. From~\eqref{eq: abs_distribution}, since 
\begin{align*}
\E_\mathrm{Q}[F\mid \mathrm{ev}_0(\omega)=x,\mathrm{ev}_{t_f}(\omega)=y]%=&\frac{\E_{\mathrm{R}^{\epsilon,\mu_0}}[F\phi_0\phi_f\mid \mathrm{ev}_0(\omega)=x,\mathrm{ev}_{t_f}(\omega)=y]}{\E_{\mathrm{R}^{\epsilon,\mu_0}}[\phi_0\phi_f\mid \mathrm{ev}_0(\omega)=x,\mathrm{ev}_{t_f}(\omega)=y]}\\
=& ~\E_{\mathrm{R}^{\epsilon,\mu_0}}[F\mid \mathrm{ev}_0(\omega)=x,\mathrm{ev}_{t_f}(\omega)=y]
\end{align*}
%\margin{I guess the last step is clear to reader? After conditioning on end points, $\phi_0$ and $ \phi_f $ are constants and so they cancel. Not sure this needs to be mentioned. Okay if not}
holds, for every bounded measurable \(F:\Omega\to\R\),  we have that 
\[
\mathrm{Q}^{x,y} = \mathrm{R}^{\epsilon,x,y}, \quad\text{for}\qquad r\text{-a.e. }(x,y).
\]
Therefore,
\[
\min_{\mathrm{P}^u \in \mathcal C_{\mathrm{SB}}} \KL(\mathrm{P}^u \| \mathrm{R}^{\epsilon,\mu_0})=\min_{\pi\in\Pi(\mu_0,\mu_f)} \KL(\pi \| r).
\]
Since $\mu_0\otimes\mu_f\in \Pi(\mu_0,\mu_f)$ this gives an upper bound
\begin{multline}\label{eq: finite_energy_bound}
\min_{\mathrm{P}^u \in \mathcal C_{\mathrm{SB}}} \KL(\mathrm{P}^u \| \mathrm{R}^{\epsilon,\mu_0})\leq \KL(\mu_0\otimes\mu_f \| r)\\=\int_{\R^d}\rho_f(y)\log(\rho_f(y))dy-\int_{\R^d\times\R^d}\log(p_{t_f,\epsilon}(x,y))\rho_0(x)\rho_f(y)dxdy
\end{multline}
where the last inequality is finite due to the fact that $\rho_0,\rho_f$ have compact support (or having exponential tail) on $\R^d$ and $p_{t_f,\epsilon}(\cdot,\cdot)$ is positive and smooth. This completes the proof.
\end{proof}
Although the skeleton of the proof is similar to~\cite{Leonard2014Survey}, the degeneracy of $G = gg^\top$ in~\eqref{eq:unref-tiltedSDE} forces the control~\eqref{eq: admissible control} to lie in $\operatorname{span}\{g_1,\dots,g_m\}$. Moreover, the finite-energy bound in~\eqref{eq: finite_energy_bound} relies on properties of the hypoelliptic heat-kernel, which in turn depend on the sub-Riemannian distance on~\eqref{eq: distribution}.

\section{Convex Eulerian Formulation}\label{sec:Eulerian}
To facilitate the proof of convergence from the horizontal Schr\"odinger bridge problem in~\eqref{sbp_precise} to the sub-Riemannian optimal-transport problem~\eqref{eq: expectation_total_control_effort}-\eqref{eq: Num_boundary condition}, we work in the Eulerian formulation.

For a fixed $\epsilon>0$, since the reference measure $\mathrm{R}^{\epsilon,\mu_0}\in\mathcal P(\Omega)$ of~\eqref{uncontrolled_dynamic_constraint} admits a density function and the probability distribution $\mathrm{P}^u$  of the process $\{ X_t^u\}_{0\leq t\leq t_f}\in \mathbb{R}^d$ in~\eqref{controlled_dynamics} is absolutely continuous with respect to $\mathrm{R}^{\epsilon,\mu_0}$, we have that $\mathrm{P}^{u_{\epsilon}}$  admits a density function which we denote as $\rho_{\epsilon}$. Then,
\begin{equation}\label{eq: time_varying density}
 \rho_{\epsilon}(t,\cdot) \in L^1\big(\mathbb{R}^d\big), \quad \rho_{\epsilon} \geq 0,\quad \int_{\R^d} \rho_{\epsilon}(t,x)  dx = 1 \quad \text{for a.e. } t
\end{equation}
and the cost functional in~\eqref{eq: objective} becomes %the expectation (with respect to the law $\rho(t,x)$ of the state) of the control cost for the original optimal control problem~\eqref{controlled_dynamics}:
\begin{equation}\label{eq: function_objective}
\mathbb{E} \Big[ \frac12 \int_0^{t_f} \tfrac{1}{2} \| u_{\epsilon}(t,  X_t^u) \|^2 \Big] =  \frac12 \int_0^{t_f}\int_{\mathbb{R}^d} \frac12 \| u_{\epsilon}(t,x) \|^2    \rho_{\epsilon}(t,x)    dx.
\end{equation}
Motivated by~\cite{BenamouBrenier2000}, let $m$ be absolutely continuous w.r.t. $\rho$, meaning
\begin{equation}\label{eq: momentum_pointwise}
m_{\epsilon}(t,x) = u_{\epsilon}(t,x)\rho_{\epsilon}(t,x)  \quad \text{for some } u_{\epsilon} \in L^2(\rho_{\epsilon}; \mathbb{R}^m).
\end{equation}
Then, from~\eqref{eq: function_objective}, we have
\[
\frac12\int_0^{t_f}\!\int_{\R^d}\|u_{\epsilon}(t,x)\|^2 \rho_{\epsilon}(t,x) dxdt
=
\int_Q \frac{\|m_{\epsilon}(t,x)\|^2}{2\rho_{\epsilon}(t,x)} dt dx,
\]
where $Q:=(0,t_f)\times\R^d$ and the function pair $(\rho_{\epsilon},m_{\epsilon})$ solves the controlled Fokker--Planck equation
\begin{subequations}
\begin{align}
   \partial_t \rho_{\epsilon}
+ \nabla_x \cdot \left(b_{\epsilon}\rho_{\epsilon}+gm_{\epsilon}\right)
- \frac{\epsilon}{2} \sum_{i,j=1}^d 
\partial_{x_i x_j}^2 \Big(G_{ij} \rho_{\epsilon} \Big)
&= 0\quad\text{in }\quad \bigl(C_c^\infty(Q)\bigr)^{*}\label{eq: fb_constraint}\\
    \rho_{\epsilon}(0, \cdot) = \rho_0, 
    \qquad 
    \rho_{\epsilon}(t_f, \cdot) &= \rho_{t_f},\quad\quad\text{in\quad $\mathcal P(\R^d)$.} 
    \label{eq:endpoint_constraints}
\end{align}
where $\bigl(C_c^\infty(Q)\bigr)^{*}$ is the dual of  the space of compactly supported smooth functions $\bigl(C_c^\infty(Q)\bigr)$.  
\end{subequations}
The Eulerian problem corresponding to~\eqref{eq: objective}-\eqref{controlled_dynamics} reads:
\begin{equation}\label{eq: convexified_stochastic}
\inf_{\substack{(\rho_{\epsilon}, m_{\epsilon})\\ \text{\eqref{eq: fb_constraint}-\eqref{eq:endpoint_constraints}}}} \mathcal{J}(\rho_{\epsilon},m_{\epsilon}):=
\int_0^{t_f} \int_{\mathbb R^d}
\frac{\| m_{\epsilon}(t,x) \|_2^2}{2   \rho_{\epsilon}(t,x)}   dx   dt.
%\quad \text{subject to~\eqref{eq: fb_constraint}-\eqref{eq:endpoint_constraints}.}
\end{equation}

One may regard $\boldsymbol\mu_{\epsilon}\in \mathcal P(Q)$ as a nonnegative real-valued probability measure on $Q$ with density $\rho_{\epsilon}$ in~\eqref{eq: time_varying density}, that is
\[
\boldsymbol\mu_{\epsilon}(dt,dx):=\rho_{\epsilon}(t,x) dt dx,
\]
and $\mathbf m_{\epsilon} \in\mathcal{M}(Q;\R^m)$ as an $\R^m$-valued Radon measure with density $m_{\epsilon}$. The statement in~\eqref{eq: momentum_pointwise} precisely means: $\mathbf m_{\epsilon}\ll \boldsymbol\mu_{\epsilon}$ and $u_{\epsilon}=\frac{d\mathbf m_{\epsilon}}{d\boldsymbol\mu_{\epsilon}}\in L^2(\boldsymbol\mu_{\epsilon};\R^m)$. In that case, consider
\[
 \mathcal C: =\bigl\{ (\boldsymbol\mu_{\epsilon}, \mathbf m_{\epsilon})\in \mathcal{P}(\overline Q)\times \mathcal{M}(\overline Q) : \mathbf m_{\epsilon}\ll \boldsymbol\mu_{\epsilon},\ 
\frac{d\mathbf m_{\epsilon}}{d\boldsymbol\mu_{\epsilon}}\in L^2(\boldsymbol\mu_{\epsilon};\R^m) \bigr\}.
\]
Then we can express \eqref{eq: convexified_stochastic} as the optimization problem,
\begin{equation}\label{measure_theoretic_objective}
\inf_{(\boldsymbol\mu_{\epsilon}, \mathbf m_{\epsilon})\in\mathcal C} 
\mathcal{J}(\boldsymbol\mu_{\epsilon}, \mathbf m_{\epsilon}):=
\int_Q \frac12\Big\|\frac{d\mathbf m_{\epsilon}(t,x)}{d\boldsymbol\mu_{\epsilon}(t,x)}\Big\|^2 d\boldsymbol\mu_{\epsilon}.%(dt,dx)
\end{equation}
%\margin{Very important one: I have been confused throughout by when we use $ \boldsymbol\mu_{\epsilon}$ vs $\boldsymbol\mu_\epsilon$ to refer to the solution of the above. You have $ \mathcal{J}$ in here, and so perhaps it made sense to use $\boldsymbol\mu_\epsilon$. Now, if you look at say (4.3), there you use $\boldsymbol\mu_\epsilon$. This is one of those changes that if done, needs to be done very carefully, and consistently. }
subject to
\begin{subequations}
\begin{align}
   \partial_t \boldsymbol\mu_{\epsilon}
+ \nabla_x \cdot \left(b_{\epsilon}\boldsymbol\mu_{\epsilon}+g\mathbf m_{\epsilon}\right)
- \frac{\epsilon}{2} \sum_{i,j=1}^d 
\partial_{ x_i  x_j}^2 (G_{ij} \boldsymbol\mu_{\epsilon} )
&= 0\quad\text{in } \bigl(C_c^\infty(Q)\bigr)^{*},\label{eq: fb_measure_constraint}\\
    \boldsymbol\mu_{\epsilon}|_{t=0} = \mu_0, 
    \qquad 
    \boldsymbol\mu_{\epsilon}|_{t=t_f} &= \mu_f,\quad\text{in } \mathcal P(\R^d).\label{eq:endpoint_measure_constraints}
\end{align}
\end{subequations}
Since the constrained 
%Fokker-Planck equation in~\eqref{measure_theoretic_objective}-\eqref{eq:endpoint_measure_constraints} 
is linear in $(\boldsymbol{\mu}_\epsilon,\mathbf{m}_\epsilon)$ and $(\boldsymbol{\mu}_\epsilon,\mathbf{m}_\epsilon)\mapsto \mathcal{J}(\boldsymbol{\mu}_\epsilon,\mathbf{m}_\epsilon)$ is jointly strictly convex (see~\cite{BenamouBrenier2000}), the Schr\"odinger bridge analysis in Theorem~\eqref{thm:properness} shows that the problem~\eqref{eq: convexified_stochastic} is proper. 
We state the existence and uniqueness result without proof.
\begin{theorem}\label{thm: existence and uniqueness}
Problem~\eqref{measure_theoretic_objective} subject to~\eqref{eq: fb_measure_constraint}-\eqref{eq:endpoint_measure_constraints} admits a unique minimizer $(\boldsymbol\mu_{\epsilon}^*, \mathbf m_{\epsilon}^*)$.  
\end{theorem}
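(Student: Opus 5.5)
Existence will come from the direct method of the calculus of variations, and uniqueness from strict convexity of the Benamou--Brenier functional together with linearity of the constraint.

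\textbf{Step 1: finite infimum.} By Theorem~\ref{thm:properness}, the pair $(\boldsymbol\mu^{\mathrm Q}, \mathbf m^{\mathrm Q})$ built from the Schr\"odinger bridge $\mathrm Q$ is admissible. Here $\boldsymbol\mu^{\mathrm Q}$ is the time-marginal flow of $\mathrm Q$, and $\mathbf m^{\mathrm Q} = u\,\boldsymbol\mu^{\mathrm Q}$ with $u=\epsilon g^\top\nabla\log\varphi$ from~\eqref{eq: admissible control}. Its cost is bounded by~\eqref{eq: finite_energy_bound} through~\eqref{kl_control_cost}. Hence the infimum $\mathcal J^*$ is finite, and we may take a minimizing sequence $(\boldsymbol\mu^n,\mathbf m^n)$ with $\mathcal J(\boldsymbol\mu^n,\mathbf m^n)\le C$.

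\textbf{Step 2: compactness.} Each $\boldsymbol\mu^n$ is a probability measure on $\overline Q$. By Cauchy--Schwarz,
\[
|\mathbf m^n|(\overline Q)\le \Big(2\mathcal J(\boldsymbol\mu^n,\mathbf m^n)\Big)^{1/2}\boldsymbol\mu^n(\overline Q)^{1/2}\le \sqrt{2C},
\]
so the momenta have uniformly bounded total variation. To get tightness of $\boldsymbol\mu^n$ in $x$, test~\eqref{eq: fb_measure_constraint} against $\langle x\rangle^2=1+|x|^2$ (truncated, then pass to the limit). Assumption~\ref{ass:wellposed} gives $|b_\epsilon|+|g|\le C(1+|x|)$ and $|G|\le C(1+|x|^2)$. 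Gronwall, the energy bound, and the compact support of $\rho_0$ then yield $\sup_t\int|x|^2\,d\boldsymbol\mu^n_t\le C'$. Prokhorov's theorem together with Banach--Alaoglu gives a subsequence with $\boldsymbol\mu^n\rightharpoonup\boldsymbol\mu^*$ narrowly and $\mathbf m^n\rightharpoonup^*\mathbf m^*$. The same moment bound makes $|\mathbf m^n|$ tight, so no mass escapes.

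\textbf{Step 3: closedness of the constraint.} Equation~\eqref{eq: fb_measure_constraint} is linear, with smooth coefficients multiplying $\boldsymbol\mu$ and $\mathbf m$, so it passes to the weak limit when tested against $C_c^\infty(Q)$. For the endpoints, the equation gives equicontinuity of $t\mapsto\int f\,d\boldsymbol\mu^n_t$ for $f\in C_c^\infty(\R^d)$: its time derivative is bounded by $\|f\|_{C^2}(C+|\mathbf m^n_t|(\R^d))$, which is integrable uniformly in $n$. Arzel\`a--Ascoli then identifies the traces at $t=0$ and $t=t_f$ as $\mu_0$ and $\mu_f$. I expect this step to be the main technical obstacle. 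The reason is the degeneracy: the equation controls only horizontal fluxes, so tightness and continuity must come from the moment and energy bounds rather than from parabolic regularity.

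\textbf{Step 4: lower semicontinuity, existence and uniqueness.} $\mathcal J$ can be written as $\sup\{\int a\,d\boldsymbol\mu+\int b\cdot d\mathbf m : a+\tfrac12|b|^2\le0,\ a,b\in C_b\}$. This dual formula (Benamou--Brenier) shows that $\mathcal J$ is weakly lower semicontinuous. It also shows that a finite value forces $\mathbf m\ll\boldsymbol\mu$ with $L^2(\boldsymbol\mu)$ density, so $(\boldsymbol\mu^*,\mathbf m^*)\in\mathcal C$ and it is a minimizer. For uniqueness, take two minimizers. Their midpoint is admissible, since the constraint is linear. The integrand $(\rho,m)\mapsto|m|^2/(2\rho)$ is jointly convex and strictly convex except along rays $(\rho,m)=\lambda(\rho_1,m_1)$. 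Equality in convexity therefore forces equal velocities $u_1=u_2$ on the common support. The density $\rho_\epsilon$ then solves a linear Fokker--Planck equation with drift $b_\epsilon+gu$ and initial datum $\rho_0$. Uniqueness for that equation, via the superposition principle and pathwise uniqueness of~\eqref{controlled_dynamics} as in the Girsanov argument of Section~\ref{sec: Stochastic Geometric Control Problem}, gives $\boldsymbol\mu_1=\boldsymbol\mu_2$. Hence $\mathbf m_1=\mathbf m_2$.

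As an alternative route to uniqueness, identify any minimizer with the strictly convex entropy problem~\eqref{sbp_precise}, whose unique solution is $\mathrm Q$.
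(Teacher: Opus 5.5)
Your architecture is the right one: finite infimum from Theorem~\ref{thm:properness}, tightness, closedness of the linear constraint, lower semicontinuity via the Benamou--Brenier dual formula, then uniqueness. It also goes beyond the paper, which states this theorem without proof and only appeals to properness, linearity of the constraint and ``joint strict convexity'' of $\mathcal J$. You are right not to rely on that last claim. $\mathcal J$ is the perspective of $\frac12|u|^2$, hence positively $1$-homogeneous and affine along rays, so equality in the midpoint inequality gives only $u_1=u_2$ on the common support, and a separate uniqueness argument for the Fokker--Planck equation is genuinely needed.

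\textbf{The failing step is the second-moment bound in Step~2.} Testing \eqref{eq: fb_measure_constraint} against $1+|x|^2$ produces the term $2\int\langle g^\top x,u^n_t\rangle\,d\boldsymbol\mu^n_t$. Under Assumption~\ref{ass:wellposed}, $|g^\top x|$ can grow quadratically; in Example~\ref{ex: unified_example}, $g^\top(x,y,z)=(x-2\alpha yz,\,y+2\alpha xz)$. Cauchy--Schwarz against the energy then needs a fourth moment, and the Gronwall inequality does not close.

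The claimed bound $\sup_t\int|x|^2d\boldsymbol\mu^n_t\le C'$ is in fact false along general bounded-energy, even minimizing, sequences. A horizontal loop of radius $r$ raises $z$ by $4\alpha\pi r^2$ at a cost of order $r^2$. Mix, with weight $p$, an admissible pair that makes such an excursion of cost $E$ and comes back. This changes $\mathcal J$ by $O(pE)$ but raises the mid-time second moment by $\gtrsim pE^2$; take $p=E^{-3/2}$ and let $E\to\infty$.

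Only tightness is needed, and a sublinear Lyapunov function gives it. For $\phi=\log(1+|x|^2)$ one has $|\nabla\phi|\lesssim(1+|x|)^{-1}$ and $|\nabla^2\phi|\lesssim(1+|x|)^{-2}$. Hence $b_\epsilon\cdot\nabla\phi$, $g^\top\nabla\phi$ and $\tr(G\nabla^2\phi)$ are bounded, and $\sup_t\int\phi\,d\boldsymbol\mu^n_t\le\int\phi\,d\mu_0+Ct_f+C\sqrt{t_f\,\mathcal J(\boldsymbol\mu^n,\mathbf m^n)}$. With this replacement, tightness of $|\mathbf m^n|$ follows from your Cauchy--Schwarz estimate, and Steps~3--4 go through.

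\textbf{The uniqueness step needs to be made precise.} Pathwise uniqueness is not available here: the drift is merely an $L^2$ Borel field and the noise is degenerate. Organize the argument as follows.
\begin{enumerate}
\item Fix one drift for both minimizers, the midpoint velocity $\bar u=d\bar{\mathbf m}/d\bar{\boldsymbol\mu}$. It equals $u_i$ $\boldsymbol\mu_i$-a.e., so $\boldsymbol\mu_1$ and $\boldsymbol\mu_2$ solve the same linear equation with the same initial datum.
\item Lift each $\boldsymbol\mu_i$ to a martingale solution $\mathrm P_i$ by a superposition principle valid for unbounded, degenerate coefficients. Since only a logarithmic moment is available, you need a version that requires integrability of $(|a|+|\langle b,x\rangle|)/(1+|x|^2)$ against $\boldsymbol\mu_i$, not of $|a|+|b|$. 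This holds here by the linear and quadratic growth bounds together with the energy bound.
\item Prove uniqueness \emph{in law} among finite-energy solutions. Localize at $\tau_n=\inf\{t:\int_0^t|\bar u(s,X_s)|^2ds\ge n\}$ and compare with the well-posed reference martingale problem via Girsanov. This works precisely because the extra drift $g\bar u$ lies in the range of the diffusion coefficient.
\end{enumerate}
Your alternative route through the strict convexity of \eqref{sbp_precise} needs the same lifting. From an Eulerian minimizer you must produce a path measure $\mathrm P$ with $\epsilon\KL(\mathrm P\|\mathrm R^{\epsilon,\mu_0})\le\mathcal J$.
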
 

We proceed to show that the unique measure minimizer $(\boldsymbol{\mu}_\epsilon^*, \mathbf{m}_\epsilon^*)$ are actually given by $d\boldsymbol{\mu}_\epsilon^* = \rho_\epsilon^* dt dx$, $d\mathbf{m}_\epsilon^* = m_{\epsilon}^\star dt dx$, where $(\rho_\epsilon^*,m_{\epsilon}^\star)$ are minimizers of Problem~\eqref{eq: convexified_stochastic}.
\begin{proposition}\label{prop: Strong duality and attainment}
Let 
\begin{equation}\label{eq: dual_set_thm}
 \Lambda := \Bigg\{ \lambda \in C^{1,2}([0,t_f]\times\mathbb{R}^d) \;\Bigg|\; \partial_t\lambda + b_\epsilon\cdot\nabla\lambda + \frac12\langle \nabla\lambda, G\nabla\lambda\rangle + \frac{\epsilon}{2}\operatorname{tr}(G\nabla^2\lambda) \le 0 \quad \text{on } (0,t_f)\times\mathbb{R}^d \Bigg\}   
\end{equation}
and define the dual functional $\map{\mathcal D}{\Lambda}{\{\R\cup-\infty\}}$ as
\begin{equation}\label{eq:dual-func-thm}
\mathcal D(\lambda):=\int_{\R^d}\lambda(t_f,x)\rho_f(x) dx
-\int_{\R^d}\lambda(0,x)\rho_0(x) dx.
\end{equation}

Assume that there exist strictly positive functions $(\varphi_\epsilon,\hat\varphi_\epsilon)$ that solve: %the Schr\"odinger system
\begin{subequations}\label{eq:Schr-system-thm}
\begin{align}
\partial_t\varphi_\epsilon + \mathcal L_\epsilon \varphi_\epsilon &=0,\label{eq:Schr-fwd}\\
\partial_t\hat\varphi_\epsilon - \mathcal L_\epsilon^* \hat\varphi_\epsilon &=0,\label{eq:Schr-bwd}\\
\varphi_\epsilon(0,\cdot)\hat\varphi_\epsilon(0,\cdot)=\rho_0,\qquad
\varphi_\epsilon(t_f,\cdot)\hat\varphi_\epsilon(t_f,\cdot)&=\rho_f,\label{eq:Schr-bc}
\end{align}
\end{subequations}
where $\mathcal L_\epsilon$ is in~\eqref{eq:Ito-generator}. Then strong duality holds: 
\[ 
\inf_{\substack{(\rho_{\epsilon}, m_{\epsilon}) \\ \text{\eqref{eq: fb_constraint}-\eqref{eq:endpoint_constraints}}}} 
\mathcal{J}(\rho_{\epsilon}, m_{\epsilon}) = \sup_{\lambda_{\epsilon}\in\Lambda}\mathcal D(\lambda_{\epsilon})
\]
and the supremum is attained at
\begin{equation}\label{eq:optimal-dual}
\lambda_\epsilon^\star:=\epsilon\log\varphi_\epsilon.
\end{equation}
Moreover, the primal infimum is uniquely attained at
\begin{align}\label{eq:optimal-primal}
&\rho_\epsilon^\star:=\varphi_\epsilon\hat\varphi_\epsilon\in C^\infty((0,t_f)\times\R^d),\nonumber\\\
&m_\epsilon^\star:=\rho_\epsilon^\star g^\top\nabla\lambda_\epsilon^\star = \epsilon \hat\varphi_\epsilon g^\top\nabla\varphi_\epsilon\in C^\infty((0,t_f)\times\R^d).
\end{align}    
\end{proposition}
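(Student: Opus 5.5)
The proof is a weak-duality inequality combined with a verification argument: the Hopf--Cole candidates $(\lambda_\epsilon^\star,\rho_\epsilon^\star,m_\epsilon^\star)$ built from the Schr\"odinger pair are shown to give equal primal and dual values, and uniqueness then follows from strict convexity.

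\emph{Weak duality.} Fix any feasible $(\rho_\epsilon,m_\epsilon)$ for \eqref{eq: fb_constraint}--\eqref{eq:endpoint_constraints} with finite cost and any $\lambda\in\Lambda$. I would test the Fokker--Planck equation against $\lambda$ (after a cutoff in $x$ and time mollification, justified by the finite-energy and compact-support assumptions), which gives
\[
\mathcal D(\lambda)=\int_Q\Big(\partial_t\lambda+b_\epsilon\cdot\nabla\lambda+\tfrac{\epsilon}{2}\operatorname{tr}(G\nabla^2\lambda)\Big)\rho_\epsilon\,dx\,dt+\int_Q \langle g^\top\nabla\lambda, m_\epsilon\rangle\,dx\,dt .
\]
The pointwise Young inequality $\langle g^\top\nabla\lambda, m\rangle\le \frac{\|m\|^2}{2\rho}+\frac{\rho}{2}\langle\nabla\lambda,G\nabla\lambda\rangle$, together with the defining inequality of $\Lambda$, yields $\mathcal D(\lambda)\le\mathcal J(\rho_\epsilon,m_\epsilon)$. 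Hence $\sup\mathcal D\le\inf\mathcal J$.

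\emph{Hopf--Cole verification.} Set $\lambda^\star=\epsilon\log\varphi_\epsilon$. Using \eqref{eq:Schr-fwd}, a direct computation gives
\[
\partial_t\lambda^\star+b_\epsilon\cdot\nabla\lambda^\star+\tfrac{\epsilon}{2}\operatorname{tr}(G\nabla^2\lambda^\star)=-\tfrac12\langle\nabla\lambda^\star,G\nabla\lambda^\star\rangle .
\]
I note that this holds with coefficient $\tfrac12$ only after the HJB in $\Lambda$ is read with the scaling the paper intends. Concretely, $\epsilon\log$ produces $-\frac{1}{2\epsilon}\langle\nabla\lambda,\epsilon G\nabla\lambda\rangle$, so the quadratic term matches $\frac12\langle\nabla\lambda,G\nabla\lambda\rangle$. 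Consequently $\lambda^\star\in\Lambda$ with equality, and smoothness follows from hypoellipticity (Proposition~\ref{lem:kernel}), as does positivity of $\varphi_\epsilon$.

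Next, for $\rho^\star=\varphi_\epsilon\hat\varphi_\epsilon$, I would check the Fokker--Planck equation with drift $gm^\star/\rho^\star=G\nabla\lambda^\star$. Using \eqref{eq:Schr-fwd}, \eqref{eq:Schr-bwd} and the sum-of-squares form \eqref{eq:sum-of-squares}, the product rule gives $\partial_t(\varphi\hat\varphi)=\varphi\mathcal L^*_\epsilon\hat\varphi-\hat\varphi\mathcal L_\epsilon\varphi$. I expect this to rearrange to $\mathcal L^*_\epsilon(\varphi\hat\varphi)-\nabla\cdot(\epsilon\hat\varphi G\nabla\varphi)$, via the identity $\varphi\mathcal L^*\hat\varphi-\hat\varphi\mathcal L\varphi=\mathcal L^*(\varphi\hat\varphi)-\epsilon\nabla\cdot(\hat\varphi G\nabla\varphi)$, which is checked componentwise using $V_i^*=-V_i-\sqrt\epsilon\,\mathrm{div}\,g_i$. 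The boundary conditions are exactly \eqref{eq:Schr-bc}.

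\emph{Equality and uniqueness.} In the weak-duality chain the Young inequality is an equality precisely when $m=\rho g^\top\nabla\lambda$, so $\mathcal J(\rho^\star,m^\star)=\mathcal D(\lambda^\star)$, which closes the gap and gives attainment on both sides. Uniqueness of the primal minimizer follows from strict convexity of $\mathcal J$ and Theorem~\ref{thm: existence and uniqueness}, with the minimizer identified with the measure minimizer there.

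\emph{Main obstacle.} The hardest step is justifying the integration by parts on the unbounded domain. It requires growth bounds on $\lambda^\star$ and $\nabla\lambda^\star$, and integrability of $\rho\langle\nabla\lambda,G\nabla\lambda\rangle$, near $t=0,t_f$, where $\varphi_\epsilon,\hat\varphi_\epsilon$ may degenerate. I would first try a cutoff $\chi_R(x)$ together with restriction to $[\delta,t_f-\delta]$, controlling the error terms by Cauchy--Schwarz with the finite energy of Theorem~\ref{thm:properness}, and then pass $\delta\to0$ using the weak continuity of $t\mapsto\rho_\epsilon(t)$.
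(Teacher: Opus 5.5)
Your proposal is correct and takes essentially the same route as the paper. The shared steps are: weak duality by testing the Fokker--Planck constraint against $\lambda$ plus the pointwise Young inequality; the Hopf--Cole computation showing $\epsilon\log\varphi_\epsilon$ satisfies the HJB in $\Lambda$ with equality; the product-rule check that $\varphi_\epsilon\hat\varphi_\epsilon$ solves the controlled equation with $g m^\star=\epsilon\hat\varphi_\epsilon G\nabla\varphi_\epsilon$; equality in Young to close the gap; and uniqueness by strict convexity.

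Two small remarks. First, no reinterpretation of the scaling is needed: as written, the term $-\tfrac{\epsilon^2}{2}\langle\nabla\varphi_\epsilon,G\nabla\varphi_\epsilon\rangle/\varphi_\epsilon^2$ from $\tfrac{\epsilon}{2}\operatorname{tr}(G\nabla^2\lambda^\star)$ cancels $\tfrac12\langle\nabla\lambda^\star,G\nabla\lambda^\star\rangle$ exactly. Second, your cutoff and $[\delta,t_f-\delta]$ truncation argument is more careful than the paper, which only says ``approximating by compactly supported test functions if necessary.''
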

\begin{proof}
We establish the result in three steps: weak duality, feasibility of the constructed pair, and equality verifying attainment.

For weak duality, for any primal feasible $(\rho,m)$ and dual feasible $\lambda\in\Lambda$, we test the Fokker-Planck constraint in~\eqref{eq: fb_constraint}-\eqref{eq:endpoint_constraints} against $\lambda$ (approximating by compactly supported test functions if necessary). Hence integration by parts yields
\begin{align*}
\mathcal D(\lambda_{\epsilon})
&=\int_0^{t_f}\!\!\int_{\R^d}
\Bigl[\rho_{\epsilon}\bigl(\partial_t\lambda_{\epsilon}+b_\epsilon\cdot\nabla\lambda_{\epsilon}
+\tfrac{\epsilon}{2}\mathrm{tr}(G\nabla^2\lambda_{\epsilon})\bigr)
+ m\cdot(g^\top\nabla\lambda_{\epsilon})\Bigr] dx dt.
\end{align*}
Since $\lambda_{\epsilon}\in\Lambda$ in~\eqref{eq: dual_set_thm}, we get that
\[
\partial_t\lambda_{\epsilon}+b_\epsilon\cdot\nabla\lambda_{\epsilon}
+\tfrac{\epsilon}{2}\mathrm{tr}(G\nabla^2\lambda_{\epsilon})
\le -\tfrac12\langle\nabla\lambda_{\epsilon},G\nabla\lambda_{\epsilon}\rangle
=-\tfrac12\|g^\top\nabla\lambda_{\epsilon}\|^2,
\]
and hence
\[
\mathcal D(\lambda_{\epsilon})\le \int_0^{t_f}\!\!\int_{\R^d}
\Bigl(m_{\epsilon}\cdot(g^\top\nabla\lambda_{\epsilon})-\tfrac{\rho_{\epsilon}}{2}\|g^\top\nabla\lambda_{\epsilon}\|^2\Bigr) dx dt.
\]
Applying Young's inequality pointwise gives
\begin{equation}\label{eq: Young's inequality}
m_{\epsilon}\cdot(g^\top\nabla\lambda_{\epsilon})=\left(\frac{m_{\epsilon}}{\sqrt{\rho_{\epsilon}}}\right)\left(\sqrt{\rho_{\epsilon}}(g^\top\nabla\lambda_{\epsilon}\right)
\le \frac{\|m_{\epsilon}\|^2}{2\rho_{\epsilon}}+\frac{\rho_{\epsilon}}{2}\|g^\top\nabla\lambda_{\epsilon}\|^2,
\end{equation}
and hence we obtain 
\[
\mathcal D(\lambda_{\epsilon})\le \mathcal J(\rho_{\epsilon},m_{\epsilon})
\]
for all $\lambda_{\epsilon}\in\Lambda$ in~\eqref{eq: dual_set_thm} and the pairs $(\rho_{\epsilon},m_{\epsilon})$ in~\eqref{eq: fb_constraint}-\eqref{eq:endpoint_constraints}. Thus 
\begin{equation}\label{eq: weak duality} 
\inf_{\substack{(\boldsymbol\mu_{\epsilon}, \mathbf m_{\epsilon})\in \mathcal C \\ \text{\eqref{eq: fb_constraint}-\eqref{eq:endpoint_constraints}}}} 
\mathcal{J}(\rho_{\epsilon},m_{\epsilon}) \geq \sup_{\lambda_{\epsilon}\in\Lambda}\mathcal D(\lambda_{\epsilon}).
\end{equation}
%\margin{should $ \rho_{\epsilon}^\star$ be $\rho_\epsilon^\star$}
For feasibility and equality in the HJB, we consider the function tuple $(\lambda_{\epsilon}^\star,\rho_{\epsilon}^\star,m_{\epsilon}^\star)$ in \eqref{eq:optimal-dual}--\eqref{eq:optimal-primal}. From \eqref{eq:Schr-bc}, $\rho_{\epsilon}^\star$ satisfies the endpoint conditions~\eqref{eq:endpoint_constraints}. A direct computation using $\lambda_{\epsilon}^\star=\epsilon\log\varphi_{\epsilon}$ and \eqref{eq:Schr-fwd} gives
\begin{multline*}
\partial_t\lambda^\star+b_\epsilon\cdot\nabla\lambda^\star
+\tfrac12\langle\nabla\lambda^\star,G\nabla\lambda^\star\rangle
+\tfrac{\epsilon}{2}\tr(G\nabla^2\lambda^\star) \\
= \epsilon\frac{\partial_t\varphi_{\epsilon}+b_\epsilon\cdot\nabla\varphi_{\epsilon}+\tfrac{\epsilon}{2}\tr(G\nabla^2\varphi_{\epsilon})}{\varphi_{\epsilon}}
-\tfrac{\epsilon^2}{2}\frac{\langle\nabla\varphi_{\epsilon},G\nabla\varphi_{\epsilon}\rangle}{\varphi_{\epsilon}^2}
+\tfrac{\epsilon^2}{2}\frac{\langle\nabla\varphi_{\epsilon},G\nabla\varphi_{\epsilon}\rangle}{\varphi_{\epsilon}^2}
\\= \epsilon\frac{\partial_t\varphi_{\epsilon}+\mathcal L_\epsilon\varphi_{\epsilon}}{\varphi_{\epsilon}}=0.
\end{multline*}
Thus $\lambda^\star\in\Lambda$, achieving the equality in \eqref{eq: dual_set_thm}. To verify primal feasibility, note that $m_{\epsilon}^\star=\rho_{\epsilon}^\star g^\top\nabla\lambda^\star$ by construction. Using \eqref{eq:Schr-system-thm}, the pair $(\rho_{\epsilon}^\star,m_{\epsilon}^\star)$ satisfies
\begin{align*}
\partial_t\rho_{\epsilon}^\star
&= \hat\varphi_{\epsilon}\,\partial_t\varphi_{\epsilon} + \varphi_{\epsilon}\,\partial_t\hat\varphi_{\epsilon}
= -\hat\varphi_{\epsilon}\,\mathcal L_\epsilon\varphi_{\epsilon} + \varphi_{\epsilon}\,\mathcal L_\epsilon^\ast\hat\varphi_{\epsilon} \\
&= -\hat\varphi_{\epsilon}\Big(b_\epsilon\cdot\nabla\varphi_{\epsilon} + \frac{\epsilon}{2}\tr(G\nabla^2\varphi_{\epsilon})\Big)
\;+\;\varphi_{\epsilon}\Big(-\nabla\!\cdot(b_\epsilon\hat\varphi_{\epsilon})+\frac{\epsilon}{2}\sum_{i,j=1}^d\partial_{ij}(G_{ij}\hat\varphi_{\epsilon})\Big) \\
&= -\nabla\!\cdot(b_\epsilon\,\rho_{\epsilon}^\star)
\;+\;\frac{\epsilon}{2}\sum_{i,j=1}^d \partial_{ij}\!\big(G_{ij}\rho_{\epsilon}^\star\big)
\;-\;\epsilon\,\nabla\!\cdot\!\big(\hat\varphi_{\epsilon}\,G\nabla\varphi_{\epsilon}\big).
\end{align*}
where the cross terms combine to yield precisely $\nabla\cdot(gm_{\epsilon}^\star) = \nabla\cdot(\epsilon\hat\varphi_{\epsilon} G\nabla\varphi_{\epsilon})$. Hence the pair $(\rho_{\epsilon}^\star,m_{\epsilon}^\star)$ solves~\eqref{eq: fb_constraint}. Furthermore, since
%\[
%\frac{\|m^*\|^2}{\rho_{\epsilon}^\star} = \frac{\epsilon^2 \hat{\varphi}^2 \|g^\top \nabla\varphi\|^2}{\varphi\hat{\varphi}} = \epsilon^2 \hat{\varphi} \frac{\|g^\top \nabla\varphi\|^2}{\varphi}
%\]
%and hence
%Using the identity $\partial_t\varphi + \mathcal{L}_\epsilon\varphi = 0$, one shows that
\[
\int_Q \frac{\|m_{\epsilon}^\star\|^2}{2\rho_{\epsilon}^\star} dxdt = \epsilon^2 \int_Q \hat{\varphi_{\epsilon}} \frac{\|g^\top \nabla\varphi_{\epsilon}\|^2}{2\varphi_{\epsilon}} dxdt= \epsilon^2\int_Q \hat{\varphi_{\epsilon}}  \varphi_{\epsilon} \frac{\|g^\top \nabla(\log\varphi_{\epsilon})\|^2}{2}   dxdt,
\]
%\margin{Where is the $1/2 $ coming from? I think there is a minor error here}
and
\[
\epsilon^2\int_Q \hat{\varphi_{\epsilon}}   \varphi_{\epsilon}   \frac{\|g^\top \nabla(\log\varphi_{\epsilon})\|^2}{2}   dxdt=\int_Q  \frac12\|u^*\|^2\rho_{\epsilon}^\star   dxdt
\]
from~\eqref{kl_control_cost} and the energy estimates in~\eqref{eq: finite_energy_bound} we conclude that
\[
\int_Q \frac{\|m_{\epsilon}^\star\|^2}{\rho_{\epsilon}^\star} dxdt<\infty.
\]

For attainment and strong duality, note that if $m_{\epsilon}^\star=\rho_{\epsilon}^\star g^\top\nabla\lambda_{\epsilon}^\star$, then the equality holds in Young's inequality in~\eqref{eq: Young's inequality} and equality holds in~\eqref{eq: weak duality}. Therefore
\[
\mathcal D(\lambda_{\epsilon}^\star) = \mathcal J(\rho_{\epsilon}^\star,m_{\epsilon}^\star).
\]
Combined with weak duality, this yields
\[
\mathcal D(\lambda^\star) = \mathcal J(\rho_{\epsilon}^\star,m_{\epsilon}^\star) = \inf\mathcal J = \sup_{\Lambda}\mathcal D,
\]
proving that $\lambda_{\epsilon}^\star$ attains the dual supremum, $(\rho_{\epsilon}^\star,m_{\epsilon}^\star)$ attains the primal infimum, and there is no duality gap.

For regularity, note that the solution for~\eqref{eq:Schr-fwd}-\eqref{eq:Schr-bc} is characterized as
\[
\varphi_\epsilon(t,\cdot) = e^{(t_f-t)\mathcal{L}_\epsilon}\varphi_{t_f}, \qquad 
\hat{\varphi}_\epsilon(t,\cdot) = e^{t\mathcal{L}_\epsilon^*}\hat{\varphi}_0,
\]
with $\hat{\varphi}_0 = \rho_0 / \varphi(0,\cdot)$ and $\varphi_{t_f}=\phi_f$ in~\eqref{eq:Sch-multplication_system},  where the forward and backward semigroups $P_t = e^{t\mathcal{L}_\epsilon^*}$ and $
Q_t = e^{t\mathcal{L}_\epsilon}$ are defined through the actions
\begin{equation}\label{eq: action_semigroup}
(P_t \varphi_0)(y) = \int_{\R^d} p_{t,\epsilon}(x,y) \varphi_0(x)   dx\quad\text{and}\quad (Q_t \varphi_f)(x) = \int_{\R^d} p_{t,\epsilon}(x,z) \varphi_f(z)   dz    
\end{equation}
respectively and $p_{t,\epsilon}(x,y)$ is the transition density function. Under Assumption~\ref{ass:Controllability condition}, since the kernels $p_{t,\epsilon}(x,y)$ are smooth and strictly positive for all $t>0$ we have that the functions $\varphi$ and $\hat{\varphi}$  inherit this smoothness and strict positivity. Hence $\rho_\epsilon^* = \varphi_\epsilon\hat{\varphi}_\epsilon \in C^\infty((0,t_f)\times\R^d)$ and $\rho_{\epsilon}^\star > 0$ everywhere. Moreover, $m_{\epsilon}^\star$ is smooth by the smoothness of $\varphi_\epsilon$, $\hat{\varphi}_\epsilon$, and $g$. The uniqueness follows from the strict convexity of $\mathcal{J}$ and the linearity of the constraint set.
\end{proof}
By the uniqueness in Theorem~\ref{thm: existence and uniqueness} and Proposition~\ref{prop: Strong duality and attainment}, we conclude here that the measure minimizer $(\boldsymbol{\mu}_{\epsilon}^*, \mathbf{m}_{\epsilon}^*)$ is actually given by $d\boldsymbol{\mu}_{\epsilon}^* = \rho_{\epsilon}^* dt dx$, $d\mathbf{m}_{\epsilon}^* = m_{\epsilon}^* dt dx$, where $(\rho_{\epsilon}^*,m_{\epsilon}^*)\in C^\infty((0,t_f)\times\R^d)\times C^\infty((0,t_f)\times\R^d;\R^d)$ are in~\eqref{eq:optimal-primal}. From this the optimal control is characterized as
\begin{equation}\label{eq:optimal-control}
u_\epsilon^\star:=\frac{m_{\epsilon}^\star}{\rho_{\epsilon}^\star}=g^\top\nabla\lambda_{\epsilon}^\star=\epsilon  g^\top\nabla\log\varphi_{\epsilon},
\end{equation}
where $\varphi_{\epsilon}$ is a strictly positive functions and together with $\hat\varphi_{\epsilon}$ solves the Schr\"odinger system~\eqref{eq:Schr-fwd}-\eqref{eq:Schr-bc}.

\section{From Schr\"odinger Bridge to Optimal Transport}\label{sec: Limiting Section}
In this section we analyze the zero-noise limit $\epsilon \downarrow 0$ of the entropic (horizontal) Schr\"odinger bridge and show that the sequence of solutions $(\boldsymbol{\mu}_\epsilon^*, \mathbf{m}_\epsilon^*)$ of Problem~\eqref{measure_theoretic_objective} converges, up to subsequences, to a solution of the sub-Riemannian Benamou--Brenier optimal transport problem for~\eqref{eq: expectation_total_control_effort}-\eqref{eq: Num_boundary condition}.
%In this section we analyze the zero-noise limit of the entropic (horizontal) Schr\"odinger bridge problem ~\eqref{eq: objective}-\eqref{controlled_dynamics}  and show that it converges (in the sense of $\Gamma$-convergence~\cite{dal2012introduction})to the sub-Riemannian Benamou--Brenier optimal transport problem~\eqref{eq: expectation_total_control_effort}-\eqref{eq: Num_boundary condition}. We proceed to the result in the Eulerian settings.
\begin{theorem}\label{thm:Gamma}
Consider the problem
\begin{equation}\label{measure_theoretic_objective2}
\inf_{(\boldsymbol\mu, \mathbf m)\in\mathcal C} 
\mathcal{J}(\boldsymbol\mu, \mathbf m):=
\int_Q \frac12\Big\|\frac{d\mathbf m(t,x)}{d\boldsymbol\mu(t,x)}\Big\|^2 d\boldsymbol\mu%(dt,dx)
\end{equation}
subject to the continuity equation
\begin{subequations}\label{eq:CE_measure_unref}
\begin{align}
\partial_t \boldsymbol\mu + \nabla_x \cdot \left(g\mathbf m\right) &= 0 \quad \text{in } \bigl(C_c^\infty(Q)\bigr)^*, \label{eq:CE_measure_constraint}\\
\boldsymbol\mu|_{t=0} = \mu_0, \qquad \boldsymbol\mu|_{t=t_f} &= \mu_f \quad \text{in } \mathcal{P}(\mathbb{R}^d). \label{eq:CE_measure_endpoint}
\end{align}
\end{subequations}
 Let $(\boldsymbol{\mu}_\epsilon^*, \mathbf{m}_\epsilon^*)$ be the unique minimizer of Problem~\eqref{measure_theoretic_objective} for each $\epsilon > 0$. Then along the convergent subsequence, we have that
        \begin{equation}\label{eq:aux1}
            \lim_{\epsilon \to 0} \mathcal{J}(\boldsymbol{\mu}_\epsilon^*, \mathbf{m}_\epsilon^*) = \mathcal{J}(\boldsymbol{\mu}^*, \mathbf{m}^*) = \min_{(\boldsymbol\mu,\mathbf m)\text{ satisfying \eqref{eq:CE_measure_constraint}--\eqref{eq:CE_measure_endpoint}}} \mathcal J(\boldsymbol\mu,\mathbf m).
        \end{equation}
\end{theorem}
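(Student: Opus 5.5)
The plan is a $\Gamma$-convergence argument in the Eulerian variables, arranged in three steps. The first step gives a uniform upper bound on the optimal values, which yields compactness. The second is a liminf inequality, which also shows that limits are admissible for \eqref{eq:CE_measure_unref}. The third is a recovery (limsup) construction from a deterministic optimizer.

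\textbf{Step 1: compactness.} I first show that $\sup_{\epsilon\in(0,1]}\mathcal J(\boldsymbol\mu_\epsilon^*,\mathbf m_\epsilon^*)<\infty$. Take a deterministic competitor $(\boldsymbol\mu,\mathbf m)$ of finite cost; one exists by Chow--Rashevskii under Assumption~\ref{ass:Controllability condition} and the compact supports, for example via the optimal coupling for \eqref{eq:OT-static}. Then build an $\epsilon$-admissible pair as in Step 3 and bound its cost uniformly. Given this bound, the Cauchy--Schwarz estimate $|\mathbf m_\epsilon^*|(A)\le (2\mathcal J)^{1/2}\boldsymbol\mu_\epsilon^*(A)^{1/2}$ shows that $\mathbf m_\epsilon^*$ is bounded in total variation. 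I also need tightness of $\boldsymbol\mu_\epsilon^*$ on $\overline Q$. For this I test \eqref{eq: fb_measure_constraint} against a function of the form $\langle x\rangle^2$. Using the bounded derivatives in Assumption~\ref{ass:wellposed}, the drift $b_\epsilon=O(\epsilon)$, and the linear growth of $g$, this gives $\sup_t\int|x|^2d\mu_{\epsilon,t}^*\le C$ uniformly in $\epsilon$. Prokhorov's theorem together with Banach--Alaoglu then gives a subsequence with $\boldsymbol\mu_\epsilon^*\rightharpoonup\boldsymbol\mu^*$ narrowly and $\mathbf m_\epsilon^*\rightharpoonup\mathbf m^*$ weakly-$*$.

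\textbf{Step 2: passing to the limit.} For $\zeta\in C_c^\infty(Q)$, the weak form of \eqref{eq: fb_measure_constraint} reads
\[
\int\partial_t\zeta\, d\boldsymbol\mu_\epsilon^*+\int (b_\epsilon\cdot\nabla\zeta+\tfrac{\epsilon}{2}\tr(G\nabla^2\zeta))\,d\boldsymbol\mu_\epsilon^*+\int g^\top\nabla\zeta\cdot d\mathbf m_\epsilon^*=0 .
\]
The middle term is $O(\epsilon)$ because its integrand is bounded by $C\epsilon$. Since $g^\top\nabla\zeta$ is continuous with compact support, the limit satisfies \eqref{eq:CE_measure_constraint}. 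For the endpoints, the uniform energy bound gives equicontinuity of $t\mapsto\mu^*_{\epsilon,t}$ in a dual-Lipschitz (negative Sobolev) norm, so the time traces pass to the limit and \eqref{eq:CE_measure_endpoint} holds. The functional $\mathcal J$ is the Benamou--Brenier action; it is convex, positively $1$-homogeneous, and jointly weakly lower semicontinuous. Hence $\mathcal J(\boldsymbol\mu^*,\mathbf m^*)\le\liminf\mathcal J(\boldsymbol\mu_\epsilon^*,\mathbf m_\epsilon^*)$, and in particular $\mathbf m^*\ll\boldsymbol\mu^*$.

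\textbf{Step 3: recovery sequence (the main obstacle).} Fix a deterministic minimizer $(\bar{\boldsymbol\mu},\bar{\mathbf m})$; it exists by the same compactness and lower semicontinuity. I need $\epsilon$-admissible pairs $(\boldsymbol\mu_\epsilon,\mathbf m_\epsilon)$ with $\limsup\mathcal J(\boldsymbol\mu_\epsilon,\mathbf m_\epsilon)\le\mathcal J(\bar{\boldsymbol\mu},\bar{\mathbf m})$. The difficulty is that noise cannot simply be subtracted by a horizontal drift, since $G$ is degenerate. My first attempt is the entropic route. By Theorem~\ref{thm:properness} and the disintegration \eqref{eq:KL-disintegration}, the optimal value equals $\epsilon\min_{\pi\in\Pi(\mu_0,\mu_f)}\KL(\pi\|r_\epsilon)$. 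This is at most $\epsilon\KL(\pi_\delta\|r_\epsilon)$ for a block approximation $\pi_\delta$ of the optimal plan $\bar\pi$ of \eqref{eq:OT-static}, with $\pi_\delta$ having a bounded density. It then suffices to have the Varadhan--L\'eandre small-time asymptotics of the hypoelliptic heat kernel, $-\epsilon\log p_{t_f,\epsilon}(x,y)\to \tfrac12 d_{SR}^2(x,y)$ uniformly on compacts. This follows from \cite{leandre2005positivity} and the submersivity used in Proposition~\ref{lem:kernel}, after rescaling time since noise $\sqrt\epsilon$ over $[0,t_f]$ is equivalent to time $\epsilon t_f$. Along this route,
\[
\limsup_{\epsilon\to0}\epsilon\KL(\pi_\delta\|r_\epsilon)\le\int\tfrac12 d_{SR}^2\,d\pi_\delta ,
\]
and I then let $\delta\to0$, using continuity of $d_{SR}$ (Chow--Rashevskii). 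Finally, the static deterministic cost coincides with the dynamic one, so $\int\tfrac12d_{SR}^2\,d\bar\pi=\mathcal J(\bar{\boldsymbol\mu},\bar{\mathbf m})$. This gives $\limsup\mathcal J(\boldsymbol\mu_\epsilon^*,\mathbf m_\epsilon^*)\le\min\mathcal J$. Combined with Step 2, this proves \eqref{eq:aux1} and identifies $(\boldsymbol\mu^*,\mathbf m^*)$ as a minimizer. The delicate points are the uniformity of the heat-kernel asymptotics on the compact supports, including near the diagonal, and the static–dynamic equivalence of the sub-Riemannian cost. For the latter I would rely on the results of \cite{agrachev2009optimal,figalli2010mass,elamvazhuthi2024benamou}.
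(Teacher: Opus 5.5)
Your proposal is correct in outline, but the recovery step takes a genuinely different route from the paper.

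\textbf{How the two routes differ.} The paper proves the $\limsup$ bound on path space. It uses the large deviation principle for $\mathrm{R}^{\epsilon,x}$ \cite{ChiariniFischer2014} together with \cite{Leonard2012} to get $\Gamma$-convergence of $\epsilon\KL(\cdot\|\mathrm{R}^{\epsilon,\mu_0})$ (with marginal constraints) to $\int\mathrm{C}\,d\mathrm{P}$. It then lifts an arbitrary admissible pair $(\tilde{\boldsymbol\mu},\tilde{\mathbf m})$ to a path measure by superposition \cite{ambrosio2005gradient}, takes a recovery sequence $\mathrm{P}^\epsilon$, and pushes it back to Eulerian pairs via Girsanov.

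You instead stay at the level of endpoint couplings. You use $\mathcal J(\boldsymbol\mu^*_\epsilon,\mathbf m^*_\epsilon)=\epsilon\min_\pi\KL(\pi\|r_\epsilon)$ from \eqref{eq:KL-disintegration}, a finite-entropy block approximation of a static optimal plan, a uniform-on-compacts small-time lower bound for the hypoelliptic kernel, and finally a static--dynamic comparison.

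\textbf{What each route buys.} Yours avoids the degenerate path-space LDP, the Girsanov representation of $\mathrm P^\epsilon$, and the Markovian projection back to Eulerian variables. It also delivers the uniform energy bound and the $\limsup$ for the whole family in one stroke; the paper gets the energy bound separately, from $\mu_0\otimes\mu_f$ and \cite{barilari2012small}. The price is the static--dynamic comparison, but you only use the direction $\min_\pi\int\tfrac12 d_{SR}^2\,d\pi\le\min\mathcal J$. That direction follows from the same superposition theorem the paper uses, together with $\tfrac12 d_{SR}^2(\omega_0,\omega_{t_f})\le\int_0^{t_f}\tfrac12\|\tilde u(t,\omega_t)\|^2dt$, so \cite{agrachev2009optimal,figalli2010mass} are not needed. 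The paper's route handles dynamic competitors directly and yields path-space information about the bridges themselves.

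\textbf{Tightness estimate.} As written, this step does not close. With $g$ of linear growth, testing against $\langle x\rangle^2$ leaves $\int 2x\cdot g u\,d\mu^*_{\epsilon,t}$, which is only bounded by $C\int\langle x\rangle^2|u|\,d\mu^*_{\epsilon,t}$. That needs higher moments. Energy bounds genuinely do not control second moments for unbounded $g$: think of $\dot x=xu$ with a large control on a small-probability event. Use $h=\log(1+|x|^2)$ instead. Then $|g^\top\nabla h|$, $|b_\epsilon\cdot\nabla h|$ and $|\tr(G\nabla^2 h)|$ are all bounded, so $\sup_t\int h\,d\mu^*_{\epsilon,t}\le C(1+\sqrt{\mathcal J})$, which gives tightness. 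Alternatively, use the paper's entropy and exponential-tightness argument.

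\textbf{Heat-kernel input.} The logarithmic kernel asymptotics do not follow from the positivity result \cite{leandre2005positivity} or from submersivity. The relevant input is L\'eandre's small-time minoration (see \cite{BenArous1988,barilari2012small}), applied to $p_{t_f,\epsilon}=p_{\epsilon t_f}$. You only need the one-sided bound $\limsup_{\epsilon\to0}\sup_{\mathrm{supp}\rho_0\times\mathrm{supp}\rho_f}\bigl(-\epsilon\log p_{t_f,\epsilon}-\tfrac12 d_{SR}^2\bigr)\le 0$, in the paper's normalization of $d_{SR}$ over $[0,t_f]$.
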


\begin{proof}
 We show this in three steps; Compactness of the sequence of minimizers, the limit sequence satisfying continuity equation and optimality of the limit sequence through energy convergence. Since the unique minimizer $(\boldsymbol{\mu}_\epsilon^*, \mathbf{m}_\epsilon^*)$ of Problem~\eqref{measure_theoretic_objective} is characterized as $d\boldsymbol{\mu}_{\epsilon}^* = \rho_{\epsilon}^* dt dx$, $d\mathbf{m}_{\epsilon}^* = m_{\epsilon}^* dt dx$, where $(\rho_{\epsilon}^*,m_{\epsilon}^*)\in C^\infty((0,t_f)\times\R^d)\times C^\infty((0,t_f)\times\R^d;\R^d)$ are in~\eqref{eq:optimal-primal}, from~\eqref{kl_control_cost},~\eqref{eq: function_objective} we have that
\[
\int_Q \frac12\Big\|\frac{d\mathbf m_{\epsilon}^\star(t,x)}{d\boldsymbol\mu^*_\epsilon(t,x)}\Big\|^2 d\boldsymbol\mu^*_\epsilon=\int_Q  \frac{\|m_{\epsilon}^\star(t,x)\|^2}{2\rho_\epsilon^*(t,x)} dx\, dt =\epsilon\KL(\mathrm{P}^{u_\epsilon^\star} \| \mathrm{R}^{\epsilon,\mu_0})
\]
holds, for all $\epsilon\in (0,1]$, where $u_\epsilon^\star$ is in~\eqref{eq:optimal-control}. Thus, from~\eqref{eq: finite_energy_bound} we have that 
\begin{multline*}
\int_Q \frac12\Big\|\frac{d\mathbf m^*_\epsilon(t,x)}{d\boldsymbol\mu^*_\epsilon(t,x)}\Big\|^2 d\boldsymbol\mu^*_\epsilon\\\leq \int_{\R^d}\rho_f(y)\log(\rho_f(y))dy-\int_{\R^d\times\R^d}\epsilon\log(p_{t_f,\epsilon}(x,y))\rho_0(x)\rho_f(y)dxdy,    
\end{multline*}
 for all $\epsilon\in (0,1]$. Since $\rho_0,\rho_{f}$ are compactly supported, from~\cite[Theorem~19]{barilari2012small}, we have that $\{\epsilon\log(p_{t_f,\epsilon}(x,y))\}_{\epsilon\in(0,1]}$ is uniformly bounded. Hence, we get the uniform bound
 \begin{equation}
    \label{eq:uniform_energy_explicit}
    \int_Q \frac12\Big\|\frac{d\mathbf m^*_\epsilon(t,x)}{d\boldsymbol\mu^*_\epsilon(t,x)}\Big\|^2 d\boldsymbol\mu^*_\epsilon=\int_Q  \frac{\|m_{\epsilon}^\star(t,x)\|^2}{2\rho_\epsilon^*(t,x)} dx\, dt \leq C,
\end{equation}
where $C$ is independent of $\epsilon$. Hence, the Cauchy--Schwarz inequality, for any Borel set $B \subset Q$ yields:
\begin{equation}
    |\mathbf m^*_\epsilon|(B) = \int_B \Big\|\frac{d\mathbf m^*_\epsilon(t,x)}{d\boldsymbol\mu^*_\epsilon(t,x)}\Big\| d\boldsymbol\mu^*_\epsilon \leq \left( 2C \right)^{1/2} \boldsymbol\mu^*_\epsilon(B)^{1/2}.
\end{equation}
Thus tightness of $\{\boldsymbol\mu^*_\epsilon\}_\epsilon$ implies tightness of $\{\mathbf m^*_\epsilon\}_\epsilon$. Since $Q=[0,t_f]\times\R^d$ and $\boldsymbol\mu^*_\epsilon(dt,dx)=\mu^*_{\epsilon,t}(dx)dt$, it is enough to show tightness of $\mu^*_{\epsilon,t}$. To this end, let $\mathrm{P}^{u_{\epsilon}^*}$ be the probability distribution of the optimal process $ X^{u^*_{\epsilon}}\in \Omega$ in~\eqref{controlled_dynamics}, where $u^*_\epsilon$ is in~\eqref{eq:optimal-control}.
Using the variational formula for relative entropy~\cite[\S6.6]{DemboZeitouni1998},\cite[\S 1.4]{DupuisEllis1997},
\[
D_{\mathrm{KL}}(\mathrm{P}^{u_{\epsilon}^*} \| R_{\epsilon}) = \sup_{f} \left\{ \int f d\mathrm{P}^{u_{\epsilon}^*} - \log \int e^{f} dR_{\epsilon} \right\},
\]
we get that 
\[
\mathrm{P}^{u_{\epsilon}^*}(K_L^c)\ \le\frac{ \KL(\mathrm{P}^{u_{\epsilon}^*}\|\mathrm{R}^{\epsilon,\mu_0})+\log 2}{\log\big(1/\mathrm{R}^{\epsilon,\mu_0}(K_L^c)\big)},\qquad\text{for all $\epsilon>0$}
\]
%{\color{blue}
where $\{K_L\}_{L\in\mathbb{N}}\subset\Omega$ are compact sets. Following from~\cite[Theorem~4.3]{BoueDupuis1998}, since $\mathrm R^\epsilon $ is exponential tight (see for example~\cite[pp~8]{DemboZeitouni1998} for definition of exponential tightness), together with~\eqref{eq:uniform_energy_explicit}, we have that 
\[
\mathrm{P}^{u_{\epsilon}^*}(K_L^c)\ \le\frac{C+\log 2}{L},\qquad\text{for all $\epsilon>0$}
\]
and hence 
\[
\sup_{\epsilon>0} \mathrm{P}^{u_{\epsilon}^*}(K_L^c)\ \le\ \frac{C+\log 2}{L}\xrightarrow[L\downarrow \infty]{}0.
\]
Hence, by Prokhorov Theorem~\cite[Theorem~5.1]{Billingsley1995}, we get that $\{\mathrm{P}^{u_{\epsilon}^*}\}_{\epsilon>0}$ is tight, which concludes that $\mu^*_{\epsilon,t}=\mathrm{ev}_{t\#}\mathrm{P}^{u_{\epsilon}^*}$ is tight. Therefore, by Prokhorov's theorem, $(\boldsymbol{\mu}_\epsilon^*, \mathbf{m}_\epsilon^*)$ is precompact. Thus, there exists a subsequence (not relabeled) $(\boldsymbol{\mu}_\epsilon^*, \mathbf{m}_\epsilon^*) \in \mathcal{M}_+(\overline{Q}) \times \mathcal{M}(\overline{Q}; \mathbb{R}^m)$ satisfying % the Fokker-Planck system in the weak sense as space-time measures:
\begin{multline}\label{eq:FP-weak}
\int_Q\!\Big(
-\partial_t\phi(t,x)\boldsymbol\mu_\epsilon(dt,dx)
- \nabla_x\phi(t,x)\cdot \big(b_\epsilon(x)\boldsymbol\mu_\epsilon(dt,dx) + g(t,x)\,\mathbf m_\epsilon(dt,dx)\big) \\
- \tfrac{\epsilon}{2}\sum_{i,j}\partial^2_{x_ix_j}\phi(t,x) G_{ij}(t,x)\,\boldsymbol\mu_\epsilon(dt,dx)
\Big) \\+\!\int_{\mathbb R^d}\!\phi(0,x)\,\mu_0(dx)-\int_{\mathbb R^d}\!\phi(t_f,x)\,\mu_f(dx)=0    
\end{multline}
for all $\phi\in C_c^\infty((0,t_f)\times\mathbb R^d)$ and
\begin{equation}
    \label{eq:weak_convergence}
    (\boldsymbol{\mu}_\epsilon^*, \mathbf{m}_\epsilon^*) \rightharpoonup (\boldsymbol{\mu}^*, \mathbf{m}^*)\qquad \text{ in }\qquad \mathcal{M}_+(\overline{Q})\times\mathcal{M}(\overline{Q}; \mathbb{R}^m).
\end{equation}
Pass the limit in the integral yields that $(\boldsymbol{\mu}^*, \mathbf{m}^*)$ satisfies~\eqref{eq:CE_measure_unref}. Since $\mathcal{J}$ is strictly convex and lower semicontinuous with respect to weak$^*$ convergence for every $\epsilon\geq 0$, we have that
\begin{equation}
    \label{eq:lsc_cost}
    \liminf_{\epsilon \to 0}\mathcal{J}(\boldsymbol{\mu}_\epsilon^*, \mathbf{m}_\epsilon^*) \geq \mathcal{J}(\boldsymbol{\mu}^*, \mathbf{m}^*).
\end{equation}

%{\color{red}
%\begin{itemize}
%\item What is the standard mollifier $\eta_{\varepsilon}$? And why are we mollifying? The $\varepsilon>0$ problem is well-posed for compactly supported measures from what I understand.
%\item Just a remark: the measures $\tilde{\mu}^{\varepsilon}_{t_i}$ don't have compact support if $\eta_{\varepsilon}$ does not have compact support. Following like 435-437, we seemed to need compactness of supports for the $\varepsilon-$ problem.
%\item Lower semicontinuity of the cost function on the paths and coercivity is not obvious. Standard argument for coercivity in fact lost because of the degeneracy of the metric. 
%\item I think I am not following what the splitting of time intervals below is achieving. It feels like if we take $t_1 = t_f$, the argument still works. We could potentially replace $\tilde{\mu}, \tilde{m}$ by an optimal solution to the limit problem, and then apply large deviations result?

%\item $[39]$ is working in the Riemannian setting. Though, I think the conclusion is true.
%\item I don't follow why 4.14 holds true below. Why is the sum of KL divergences for the $\epsilon>0$ problem less than the sum of the OT costs? Are we using some property of KL divergence here?
%\item Line 518:"Following from [37, Proposition 2.5, Corollary 2.6, Theorem 2.7]", This feels like citing too many results at once. Is the final one not enough?
%\end{itemize}
%}
To conclude that $(\boldsymbol{\mu}^*, \mathbf{m}^*)$ is a minimizer, we prove the corresponding lower limit superior bound for~\eqref{eq:lsc_cost}. To this end, we consider two functionals
\[
\mathcal G_{\epsilon}(\mathrm{P}):= \epsilon D_{\mathrm{KL}}(\mathrm{P} \| \mathrm{R}^{\epsilon,\mu_0}) + \iota_{\{(\mathrm{ev}_0)_\# \mathrm{P}=\mu_0,\; (\mathrm{ev}_{t_f})_\# \mathrm{P}=\mu_f\}}(\mathrm{P})
\]
where $\mathrm{R}^{\epsilon,\mu_0}$ is the law of~\eqref{uncontrolled_dynamic_constraint} and
\[
\mathcal G_{0}(\mathrm{P}):=\int_{\Omega} \mathrm{C}(\omega) \, d\mathrm{P} + \iota_{\{(\mathrm{ev}_0)_\# \mathrm{P}=\mu_0,\; (\mathrm{ev}_{t_f})_\# \mathrm{P}=\mu_f\}}(\mathrm{P}),
\]
where
\begin{equation}\label{eq: rate_function}
\mathrm{C}(\omega):=
\begin{cases}
\displaystyle
\frac12\int_{0}^{t_{f}}
\big\langle \dot\omega_t,\,
G(\omega_t)^\dagger \dot\omega_t\big\rangle\,dt,
&
\omega \in \mathrm{AC}([0,t_f]; \mathbb{R}^d), \; \dot{\omega}_t \in \mathcal{D}_{\omega_t} \text{ a.e.}, \\[10pt]
+\infty, & \text{otherwise}.
\end{cases}
\end{equation}

We claim that 
\begin{equation}\label{eq: Gamma_convergence}
\Gamma\text{-}\lim_{\epsilon \rightarrow 0} \mathcal G_{\epsilon}= \mathcal G_0   \qquad\text{on}\qquad\mathcal P(\Omega).
\end{equation}
Once this is established, the recovery part of the definition of $\Gamma$-convergence~\cite{dal2012introduction} will provide us with the lower limit superior. To prove~\eqref{eq: Gamma_convergence}, we use large deviation principle (LDP)~\cite{DemboZeitouni1998,DupuisEllis1997} for  the law of~\eqref{uncontrolled_dynamic_constraint} initialized at $X_{0}=x$, which we denote as $\mathrm{R}^{\epsilon,x}\in\mathcal P(\Omega)$. To this end, since $b_{\epsilon} :=\frac\epsilon2\sum_{i=1}^{m}\nabla_{g_i}g_i\rightarrow 0$ as $\epsilon\rightarrow0$  uniformly on $\mathbb{R}^d$ and under Assumption~\ref{ass:wellposed} we have $b_{\epsilon} :=\frac\epsilon2\sum_{i=1}^{m}\nabla_{g_i}g_i$ satisfies linear growth,  following from~\cite[Theorem~3.1]{ChiariniFischer2014}, we have that $\mathrm{R}^{\epsilon,x}$ satisfies large deviation principle with good rate functional
\[
I_x(\omega) = \inf_{\substack{ u \in L^2([0,t_f]; \mathbb{R}^m) \\ \dot{\omega}_t = g(\omega_t) u_t \text{ a.e.} \\ \omega_0 = x }} \frac12 \int_0^{t_f} \|u_t\|^2 \, dt=\frac12 \int_0^{t_f} \langle \dot{\omega}_t, G(\omega_t)^\dagger \dot{\omega}_t \rangle \, dt=  C_x(\omega),
\] 
where $C_x(\omega):= \mathrm{C}(\omega) + \iota_{\{\omega_0 = x\}}(\omega)$ and $\mathrm{C}$ is defined in~\eqref{eq: rate_function}. Therefore, following from~\cite[Proposition~2.5 and Proof of Theorem~6.1]{Leonard2012} we conclude that~\eqref{eq: Gamma_convergence} holds. 

Let $(\tilde{\boldsymbol{\mu}}, \tilde{\mathbf{m}})$ be any admissible pair satisfying \eqref{eq:CE_measure_unref} with $\mathcal{J}(\tilde{\boldsymbol{\mu}}, \tilde{\mathbf{m}}) < \infty$. Then, following from~\cite[Theorem~8.2.1]{ambrosio2005gradient}, there exists a probability measure $\mathrm{\tilde P} \in \mathcal{P}(\Omega)$  such that 
\begin{equation}\label{eq: path-space_representation}
\tilde{\boldsymbol{\mu}}_t= (\mathrm{ev}_t)_\# \mathrm{\tilde P}.
\end{equation}
Moreover, by setting $\tilde{\mathbf{m}} = \tilde{u} \tilde{\boldsymbol{\mu}}$, the path measure $\mathrm{\tilde P}$ is concentrated on absolutely continuous $\omega$ satisfying $\dot{\omega}_t = g(\omega_t) \tilde{u}(t,\omega_t)\in \mathcal{D}_{\omega_t}$ in~\eqref{eq: distribution} a.e $t\in [0,t_f]$  and
\begin{equation}\label{eq: measures_to path_measures}
\mathcal{J}(\tilde{\boldsymbol{\mu}}, \tilde{\mathbf{m}}) = \int_\Omega \int_0^{t_f} \frac12 \|\tilde{u}(t,\omega_t)\|^2 dt d\mathrm{\tilde P}<\infty.  
\end{equation}
Since $(\mathrm{ev}_0,\mathrm{ev}_{t_f})_\#\mathrm{\tilde P}=(\mu_0,\mu_f)$  and $\mathcal G_0(\mathrm{\tilde P})= \int_{\Omega} \mathrm{C}(\omega) \, d\mathrm{\tilde P}=\mathcal{J}(\tilde{\boldsymbol{\mu}}, \tilde{\mathbf{m}})<\infty$, the recovery part of~\eqref{eq: Gamma_convergence} implies that there exists a sequence 
$\mathrm{P}^\epsilon\rightharpoonup\mathrm{\tilde P}$ with 
$(\mathrm{ev}_0,\mathrm{ev}_{t_f})_\#\mathrm{P}^\epsilon=(\mu_0,\mu_f)$ such that
\begin{equation}
    \label{eq:recovery_path_space}
    \limsup_{\epsilon \to 0} \epsilon D_{\mathrm{KL}}(\mathrm{P}^\epsilon \| \mathrm{R}^\epsilon) \leq \int_{\Omega} \mathrm{C}(\omega) \, d\mathrm{\tilde P}=\mathcal{J}(\tilde{\boldsymbol{\mu}}, \tilde{\mathbf{m}})<\infty.
\end{equation}
This proves the claim. 

We are finally in a position to prove~\eqref{eq:aux1}. Since $\mathrm{P}^\epsilon \ll \mathrm{R}^\epsilon$, by Girsanov's theorem~\cite[Chapter~8.6]{oksendal2003stochastic} we have that
\[
\left.\frac{d\mathrm{R}^\epsilon}{d\mathrm{P}^\epsilon}\right|_{\mathcal{F}_t} = \exp\left( \int_0^t \frac{1}{\sqrt{\epsilon}} \tilde{u}_\epsilon^\top dW_s - \frac12 \int_0^t \frac{1}{\epsilon} \|\tilde{u}_\epsilon\|^2 ds \right)
\]
for some adapted process $\tilde{u}_\epsilon$. This implies that under $\mathrm{P}^\epsilon$, the canonical process satisfies:
\[
dx_t = \bigl( b_\epsilon(x_t) + g(x_t) \tilde{u}_\epsilon(t, x_t) \bigr) dt + \sqrt{\epsilon} \, g(x_t) dW_t^{\mathrm{P}^\epsilon}
\]

%We state here that, under Assumption~\ref{ass:Controllability condition} and~\ref{ass:wellposed}, one can construct this sequence using similar techniques in the Euclidean setting in~\cite[Proposition 2.5]{Leonard2012}. The key difference here is that we rely on the sub-Riemannian metric which exists under our controllability assumption. {\color{red} For the purpose of space we postpone this derivation to another avenue.}\margin{So, I don't like this whole paragraph as part of the proof. Do we need to say this here? \kecmt{ke:I agree.}} 
%Define $(\tilde{\rho}_\epsilon, \tilde{m}_\epsilon)$ from $\mathrm{P}^\epsilon$ via the Eulerian disintegration:
From the sequence $\{\mathrm{P}^\epsilon\}_\epsilon$, we construct the sequence of pairs %\margin{pairs} 
$(\tilde{\boldsymbol{\mu}}_\epsilon, \tilde{\mathbf{m}}_\epsilon)$ via the pushforward mappings% using the continuous mappings
\begin{equation}
    \label{eq:eulerian_from_path}
    \tilde{\boldsymbol{\mu}}_{\epsilon,t} := (\mathrm{ev}_t)_\# \mathrm{P}^\epsilon, \qquad \int_Q \psi \cdot d\tilde{\mathbf{m}}_\epsilon := \mathbb{E}^{\mathrm{P}^\epsilon}\left[ \int_0^{t_f} \psi(t, x_t) \cdot \tilde{u}_\epsilon(t, x_t) dt \right],
\end{equation}
%\margin{A bit confused here because I thought that we show $(x_t)_\# \mathrm{P}^\epsilon$ by $ \tilde{\boldsymbol{\mu}}_{\epsilon,t}$. Maybe I am missing something here}
where $\psi\in C_c(Q;\R^d)$. Then $(\tilde{\boldsymbol{\mu}}_\epsilon, \tilde{\mathbf{m}}_\epsilon)$ satisfies~\eqref{eq: fb_measure_constraint}-\eqref{eq:endpoint_measure_constraints} and %\margin{are these correct eqref ?} and by construction:
\[
\mathcal{J}(\tilde{\boldsymbol{\mu}}_\epsilon, \tilde{\mathbf{m}}_\epsilon) = \epsilon D_{\mathrm{KL}}(\mathrm{P}^\epsilon \| \mathrm{R}^\epsilon).
\]
By optimality, we have that $\mathcal{J}(\boldsymbol{\mu}_\epsilon^*, \mathbf{m}_\epsilon^*)\leq \mathcal{J}(\tilde{\boldsymbol{\mu}}_\epsilon, \tilde{\mathbf{m}}_\epsilon)$ and hence
\begin{equation}
    \mathcal{J}(\boldsymbol{\mu}^*, \mathbf{m}^*) \leq \liminf_{\epsilon \to 0} \mathcal{J}(\boldsymbol{\mu}_\epsilon^*, \mathbf{m}_\epsilon^*) \leq \limsup_{\epsilon \to 0} \mathcal{J}(\tilde{\boldsymbol{\mu}}_\epsilon, \tilde{\mathbf{m}}_\epsilon) \leq \mathcal{J}(\tilde{\boldsymbol{\mu}}, \tilde{\mathbf{m}}).
\end{equation}
\end{proof}

\section{Numerical Method and Examples}\label{sec: Numerical Method}
In this section, we develop numerical methods for solving the optimality conditions~\eqref{eq:Schr-fwd}-\eqref{eq:Schr-bc}. %{\color{blue}Unlike the purely hyperbolic transport equations in distributional optimal transport, our parabolic formulation enables robust forward-backward numerical schemes.} 
Using the boundary condition in~\eqref{eq:Schr-bc} we can define the operator $\map{T_{\epsilon}}{L^2(\R^d)}{L^2(\R^d)}$ as
\begin{equation}\label{eq: contraction operator}
(T_{\epsilon}\varphi_f)(y)
:=
\frac{\rho_f(y)}{\displaystyle\int_{\mathbb R^d} p_{t_f,\epsilon}(x,y)
\left[ \frac{\rho_0(x)}{(Q_{t_f}\varphi_f)(x)} \right] dx },
\end{equation}
%\margin{Where is $Q_{t_f}$ defined?}
where $Q_{t_f}$ is defined in~\eqref{eq: action_semigroup}. Since $\rho_0,\rho_f$ are positive density, compactly supported (or having exponential tail) on $\R^d$ and the transition kernel $p_{t,\epsilon}(x,y)$ is 
strictly positive under the bracketing  Assumption in~\ref{ass:Controllability condition} (or the H\"ormander condition), following from~\cite{Chen2016Hilbert}, $T_{\epsilon}$ in~\eqref{eq: contraction operator} is a contraction in the Hilbert metric on the cone of positive functions; hence the iteration 
$\varphi_f^{k+1} = T_{\epsilon}(\varphi_f^{k})$ converges linearly to the unique
fixed point $\varphi_f$, where 
\[
T_{\epsilon}(\varphi_f)= \varphi_f.
\]
Once the fixed point $\varphi_f$ is obtained, the full space-time solution for~\eqref{eq:Schr-fwd}-\eqref{eq:Schr-bc} is recovered by
\[
\varphi(t,x) = (Q_{t_f-t} \varphi_f)(x), \qquad\text{and}\qquad
\widehat{\varphi}(t,x) = (P_t\big( \rho_0 / (Q_{t_f}\varphi_f) \big))(x),
\]
where $P_t$ and $Q_{t_f}$ is defined in~\eqref{eq: action_semigroup}. We state here that, for \(\epsilon>0\), the transition density \(p_{t,\epsilon}(x,y)\)
is the fundamental solution of the forward Kolmogorov equation in the $y$-variable:
\[
\partial_t p_{t,\epsilon}(x,y) - (\mathcal L_{\epsilon,y}^* p_{t,\epsilon})(x,y)=0,\qquad t>0,
\qquad
p_{0,\epsilon}(x,y)=\delta_x(y),
\]
where \(\mathcal L_{\epsilon,y}^*\) denotes \(\mathcal L_\epsilon^*\) acting on the \(y\)-variable.
%Equivalently, \(p_{t,\epsilon}\) satisfies the backward Kolmogorov equation in the \(x\)-variable:
%\[
%\partial_t p_{t,\epsilon}(x,y) + (\mathcal L_{\epsilon,x} p_{t,\epsilon})(x,y)=0,\qquad t>0\qquad
%p_{0,\epsilon}(x,y)=\delta_y(x).
%\]
\begin{figure}[t]
    \centering
    
    \begin{subfigure}[b]{0.45\textwidth}
        \centering
        \includegraphics[width=0.5\textwidth]{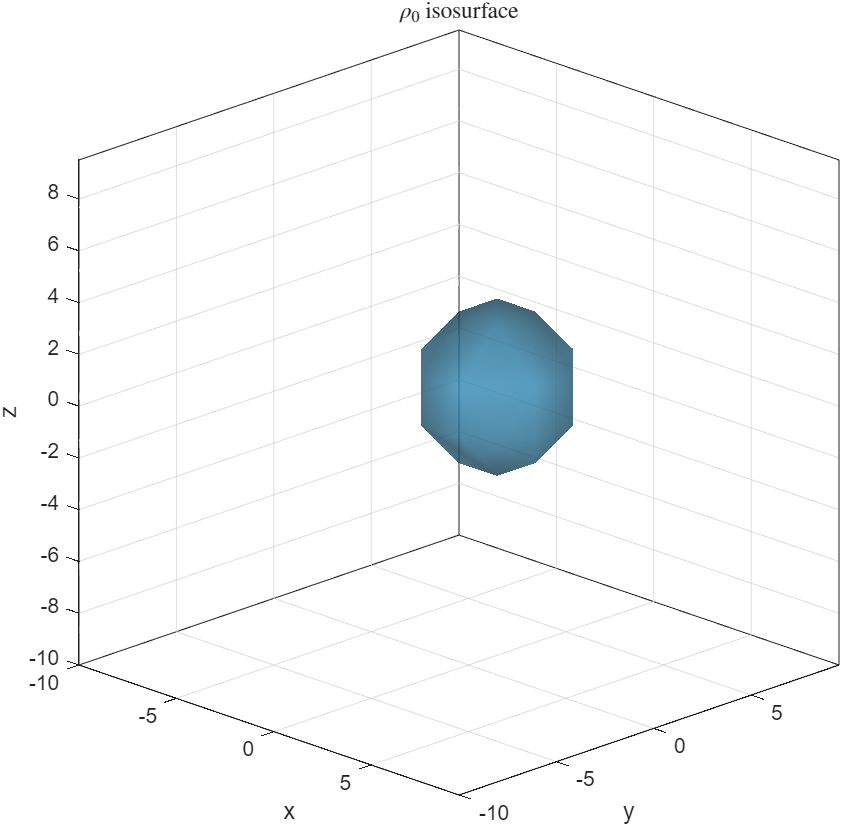}
        \caption{Initial Gaussian $\rho_0$}
        \label{fig:rho0}
    \end{subfigure}
    \hfill
    \begin{subfigure}[b]{0.45\textwidth}
        \centering
        \includegraphics[width=0.5\textwidth]{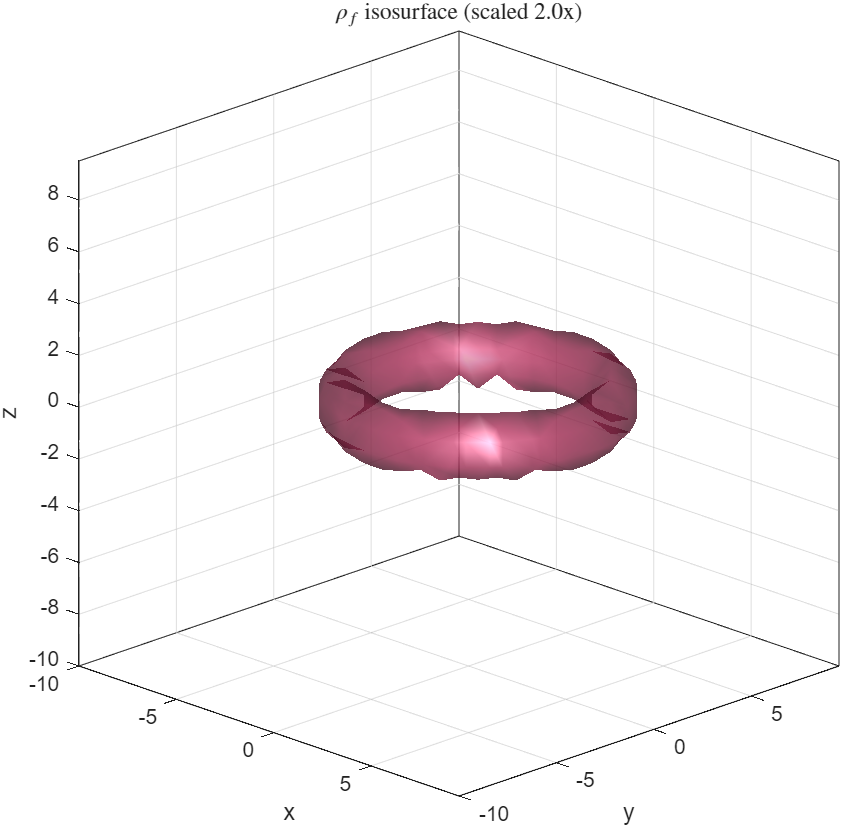}
        \caption{Final ring $\rho_f$}
        \label{fig:rhof}
    \end{subfigure}
    \caption{The above is the plot of the  isosurfaces of the initial and final densities defined in~\eqref{eq: initial density} and~\eqref{eq: final density}, respectively.}
    \label{fig:boundary_densities}
\end{figure}
%\margin{where is tol defined/initialized below?}
\begin{algorithm}
{\footnotesize
\caption{Static Sinkhorn for Degenerate Schr\"odinger Bridge}
\begin{enumerate}
\item \textbf{Input:} $\rho_0,\rho_f$, tolerance level $\text{tol}$, $\{\epsilon_0 > \cdots > \epsilon_K\approx 0\}$, 
      transition kernel $p_{t,\epsilon_k}(x,y)$ % for each $\epsilon$

\item \textbf{For each $\epsilon_k$ in $\{\epsilon_0, \epsilon_1, \dots, \epsilon_K \approx 0\}$:}
\begin{enumerate}
\item If $k > 0$: initialize $\varphi_f^{\epsilon_k,(0)} = \varphi_f^{\epsilon_{k-1},*}$
\item \textbf{Sinkhorn Iteration until convergence:}
For $m \in \mathbb{N}_{\geq 0}$:
\begin{enumerate}
\item \textbf{Compute $\widehat{\varphi}_0^{\epsilon_k,m}$:}
\[
\widehat{\varphi}_0^{\epsilon_k,m}(x) = \frac{\rho_0(x)}{\displaystyle\int p_{t_f,\epsilon_k}(x,y) \varphi_f^{\epsilon_k,m}(y)   dy}
\]

\item \textbf{Update $\varphi_f^{\epsilon_k,m+1}$:}
\[
\varphi_f^{\epsilon_k,m+1}(y) = \frac{\rho_f(y)}{\displaystyle\int p_{t_f,\epsilon_k}(x,y) \widehat{\varphi}_0^{\epsilon_k,m}(x)   dx}
\]

\item \textbf{Check convergence:} $\|\varphi_f^{\epsilon_k,m+1} - \varphi_f^{\epsilon_k,m}\| < \text{tol}$
\end{enumerate}

\item \textbf{Store fixed point:} $\varphi_f^{\epsilon_k,*} = \varphi_f^{\epsilon_k,m}$

\item \textbf{Reconstruct full solution via single PDE solves:}
\begin{enumerate}
\item \textbf{Store $\varphi_{\epsilon_k}^*(t,x)$ from backward equation:}
\[
\partial_t\varphi + \mathcal{L}_{\epsilon_k}\varphi = 0,\quad \varphi(t_f,\cdot) = \varphi_f^{\epsilon_k,*}
\]

\item \textbf{Compute initial $\widehat{\varphi}$ from $\widehat{\varphi}_{\epsilon_k}^*(0,x) = \rho_0(x) / \varphi_{\epsilon_k}^*(0,x)$ and store $\widehat{\varphi}_{\epsilon_k}^*(t,x)$ from the Forward equation:}
\[
\partial_t\widehat{\varphi} - \mathcal{L}_{\epsilon_k}^*\widehat{\varphi} = 0,\quad 
\widehat{\varphi}(0,\cdot) = \widehat{\varphi}_{\epsilon_k}^*(0,\cdot)
\]

\end{enumerate}

\item \textbf{Compute optimal quantities $\{\rho_{\epsilon_k}^*, u_{\epsilon_k}^*\}_{k=0}^K$:}
\begin{align*}
\rho_{\epsilon_k}^*(t,x) = \widehat{\varphi}_{\epsilon_k}^*(t,x) \varphi_{\epsilon_k}^*(t,x)\quad\text{and}\quad
u_{\epsilon_k}^*(t,x) = \epsilon_k   g(x)^\top \nabla \log \varphi_{\epsilon_k}^*(t,x)
\end{align*}
\end{enumerate}
\item \textbf{Simulate controlled dynamics for verification:}
For each $\epsilon_k$, simulate:
\[
dX_t^u = g(X_t^u) u_{\epsilon_k}^*(t, X_t^u) dt + \sqrt{\epsilon_k} g(X_t^u) dW_t,
\quad x_0^u \sim \mu_0
\]
\end{enumerate}
}
\end{algorithm}

\begin{figure}[t]
    \centering
    \vspace{-0.3cm}
    % Epsilon = 1
    \includegraphics[width=0.9\textwidth]{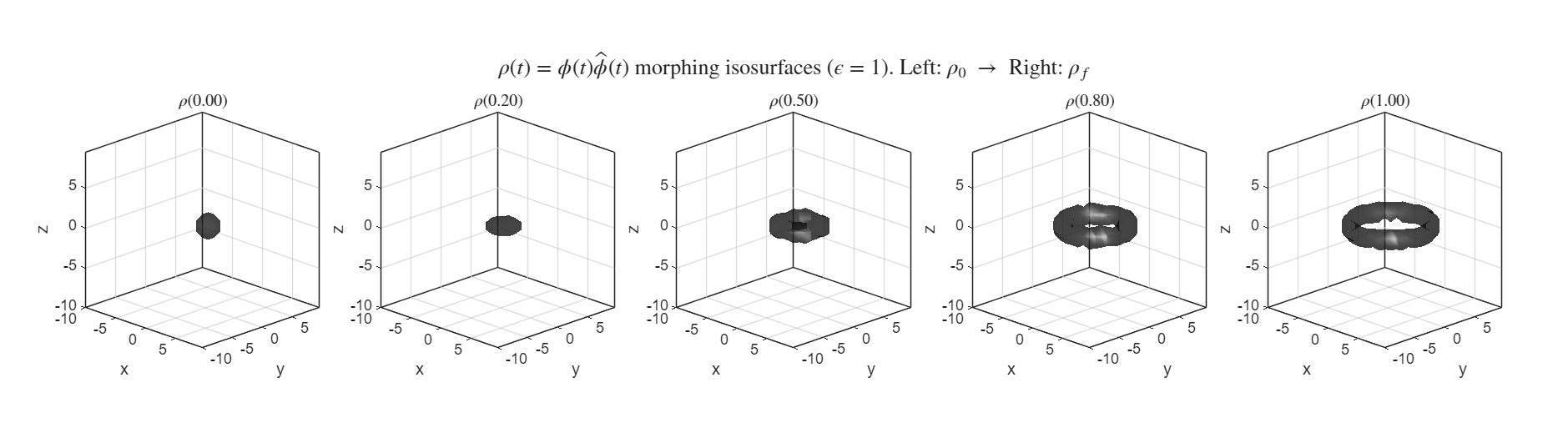}
    
    \vspace{-0.3cm}
    
  %  % Epsilon = 0.5
  %  \includegraphics[width=0.9\textwidth]{rho(t)_e0.5.png}
    
  %  \vspace{-0.3cm}
  %  
  %  % Epsilon = 0.1
  %  \includegraphics[width=0.9\textwidth]{rho(t)_e0.1.png}
    
 %   \vspace{-0.3cm}
    
    % Epsilon = 0.01
    \includegraphics[width=0.9\textwidth]{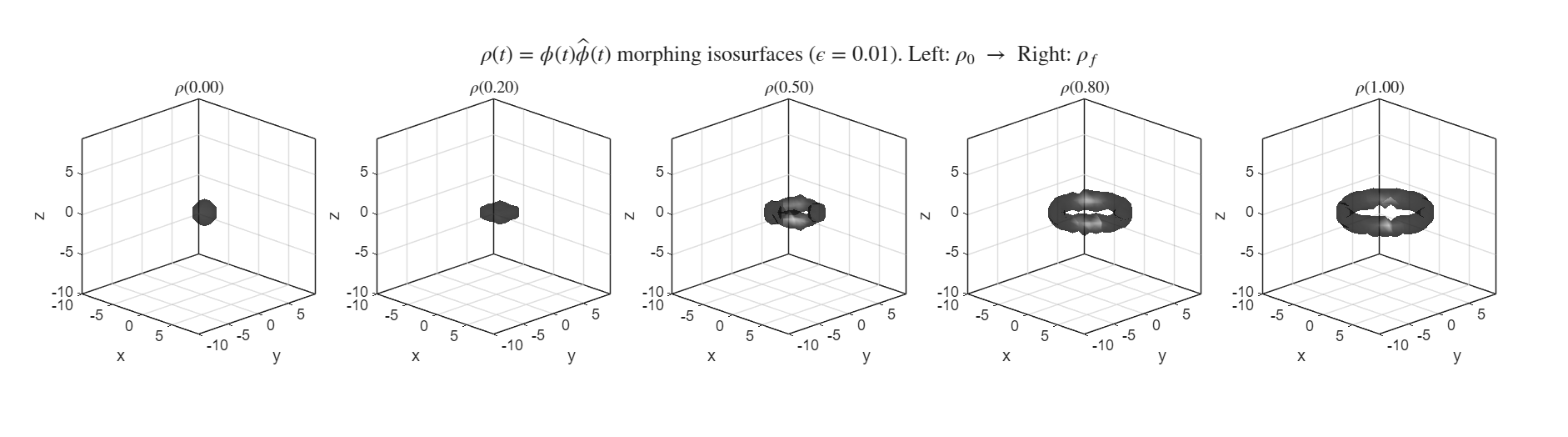}
    
    \caption{The plots above are the isosurface plots of the optimal density $\rho(t)$ associated to Example~\ref{ex: unified_example}, with $\alpha=\frac{1}{4}$. The aim is to demonstrate a smooth and continuous morphing from the initial Gaussian blob in~\eqref{eq: initial density} to the final ring-shaped distribution in~\eqref{eq: final density}, for any $\epsilon>0$. Hence, as $\epsilon \to 0$, the Schr\"{o}dinger bridge approaches deterministic optimal transport.}
    \label{fig:all_eps_morphing}
\end{figure}
\begin{figure}[t]
  \centering
  \includegraphics[width=\textwidth]{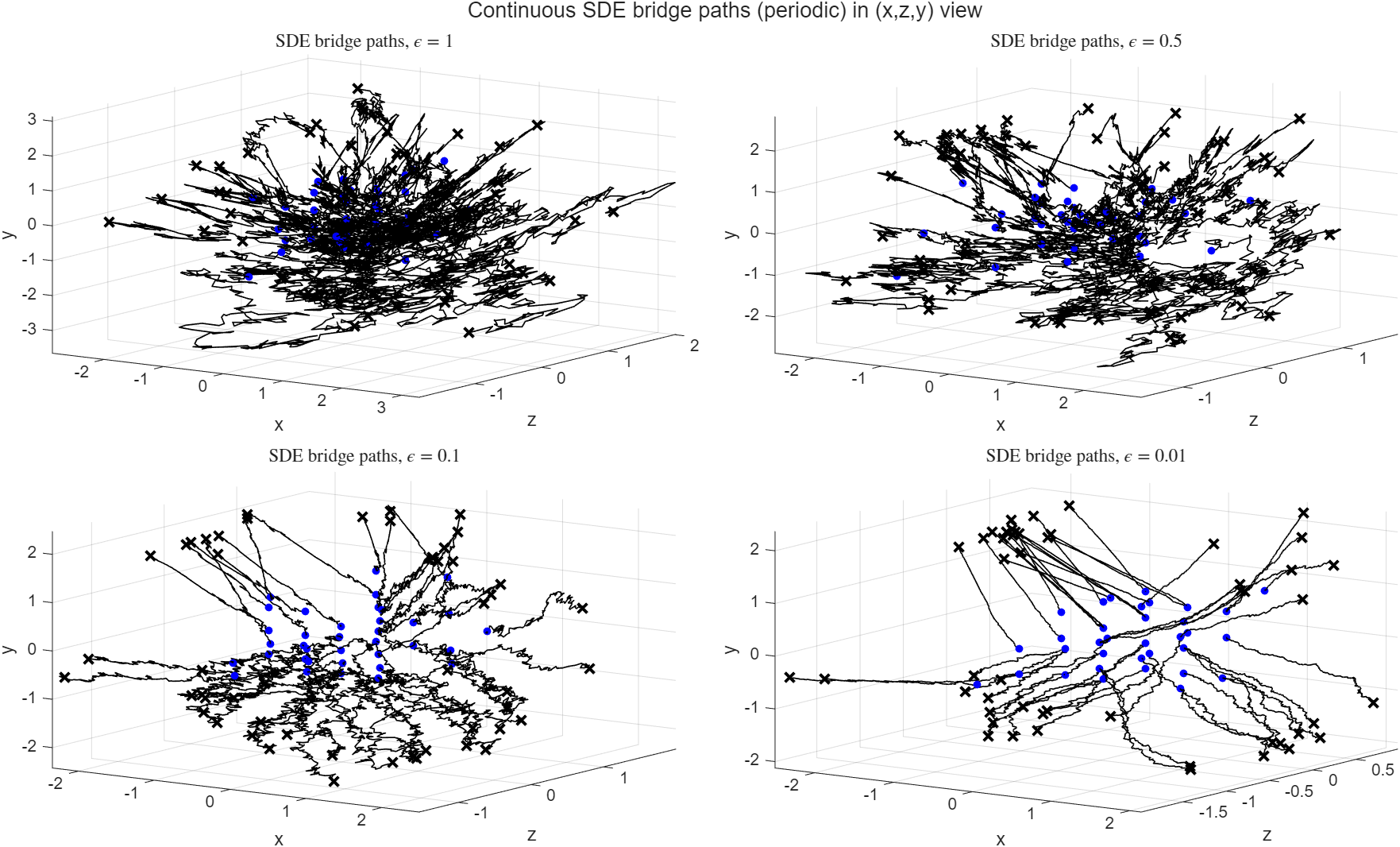}
  \caption{This plot is the corresponding Heisenberg bridge path associated to Example~\ref{ex: unified_example}. Figure~\ref{fig:all_eps_morphing} demonstrates the morphing from the initial to final densities. This is the
sample trajectories of the controlled diffusion used to realize a Schr\"odinger bridge on $3D$
domain for noise intensities
$\epsilon\in\{1,0.5,0.1,0.01\}$.
Black curves show $n=50$ (for better visualization) sample paths $t\mapsto(x_t,y_t,z_t)$ in~\eqref{controlled_dynamics} where~\eqref{eq: diff_coefficient}; marked points indicate the initial and
terminal positions.
The bridge dynamics is obtained by changing the drift of the uncontrolled horizontally constrained Heisenberg-type reference
diffusion in Fig.~\ref{fig:diffusion_comparison_three}-(a) so that its time-marginals interpolate
between prescribed endpoint laws.
As $\epsilon$ decreases, the ensemble concentrates
and trajectories exhibit constrained curvature indicating the effect of the presence of horizontal constraint.}
\label{fig:heisenberg-sb-paths}
\end{figure}
%\margin{We have a subsection of Examples and they example environment. I would remove/change this subsection title}
%\subsection{Examples}
We state here that the minimum dimension for our setup to be non-trivial is $3$. The reason is that in this setting, one can genuinely have non-holonomic constraints with underactuated systems and the  Schr\"odinger bridge problem, interpreted as the optimal change in drift
becomes non-trivial. Throughout, we fix the initial density to be the standard Gaussian
 \begin{equation}\label{eq: initial density}
 \rho_0(x,y,z)
=
\frac{1}{(2\pi)^{3/2} \sigma_x\sigma_y\sigma_z} 
\exp\!\left(
-\frac12\left[
\frac{(x-m_x)^2}{\sigma_x^2}
+\frac{(y-m_y)^2}{\sigma_y^2}
+\frac{(z-m_z)^2}{\sigma_z^2}
\right]\right),     
 \end{equation}
where $(m_x,m_y,m_z)=(0,0,0)$, $\sigma_x=\sigma_y=\sigma_z=0.55$, and the desired density to be ``ring-like'' (see, Figure~\ref{fig:boundary_densities}).%\margin{should you refer to the figure so that people know what ring like means? }:
\begin{equation}\label{eq: final density} 
\rho_f(x,y,z)=\frac{1}{Z_f} \tilde\rho_f(x,y,z),
\qquad
Z_f=\iiint_{\mathbb{R}^3}\tilde\rho_f(x,y,z) dx dy dz,    
\end{equation}
where 
\[
\tilde\rho_f(x,y,z)
=
\exp\!\left(-\frac{(r-R_0)^2}{2s_R^2}\right) 
\exp\!\left(-\frac{(z-m_z)^2}{2\sigma_z^2}\right),
\qquad r=\sqrt{x^2+y^2}.
\]
Here $r=\sqrt{x^2+y^2}$,  $R_0=2.5$, $s_R=0.45$, $m_z=0$, and $\sigma_z=0.70$ (see Figure~\ref{fig:boundary_densities}).
We illustrate our theoretical results above by the following examples.

%demonstrate this by considering three fundamental examples of optimal transport over non-holonomic constraints: the Brockett integrator, the Heisenberg system, and the Unicycle model which illustrate our theoretical results above.
%[Family of Sub-Riemannian Schr\"odinger Bridges]
%\margin{Using \em{} change the example environment from italic to standard, everywhere}
\begin{example}\label{ex: unified_example}
\em{Given initial and final densities $\rho_0,\rho_f$ in~\eqref{eq: initial density} and~\eqref{eq: final density}, we consider the optimal transport from an ensemble of states   $\begin{pmatrix}x_0 \\ y_0 \\ z_0\end{pmatrix} \sim \rho_0$, where each particle's motion is constrained by the following controlled dynamics:
\begin{equation}\label{eq: unified_dynamics}
d\begin{pmatrix}x_t \\ y_t \\ z_t\end{pmatrix} = \begin{pmatrix}1 & 0 \\ 0 & 1 \\ -2\alpha y_t & 2\alpha x_t\end{pmatrix}\begin{pmatrix}u^1_t \\ u^2_t\end{pmatrix}dt,
\end{equation}
where $\alpha>0$, so that at time $t=t_f$ we have that $\begin{pmatrix}x_{t_f} \\ y_{t_f} \\ z_{t_f}\end{pmatrix} \sim \rho_f$. Here optimality is measured in~\eqref{eq: expectation_total_control_effort}. The vector fields associated with the control directions are:
\begin{equation}\label{eq: unified_vector_fields}
g_1 = \partial_{x} - 2\alpha y \partial_z, \quad g_2 = \partial_{y} + 2\alpha x \partial_z.
\end{equation}

To formulate the associated Schr\"{o}dinger bridge problem, we introduce a small noise aligned with the control directions, leading to the stochastic process~\eqref{controlled_dynamics}
where 
\begin{equation}\label{eq: diff_coefficient}
g(x,y,z) = \begin{pmatrix}1 & 0 \\ 0 & 1 \\ -2\alpha y & 2\alpha x\end{pmatrix}\qquad \text{and}\qquad\text{ $W_t$ is a Brownian motion in $\mathbb{R}^2$}.    
\end{equation}
 The diffusion tensor is given by:
\begin{equation}\label{eq: unified_diffusion_tensor}
G = gg^\top = \begin{pmatrix}
1 & 0 & -2\alpha y \\
0 & 1 & 2\alpha x \\
-2\alpha y & 2\alpha x & 4\alpha^2(x^2+y^2)
\end{pmatrix},
\end{equation}
which has $\mathrm{rank}(G) = 2 < 3$ and is thus degenerate. However, the Lie bracket of the vector fields yields:
\[
[g_1, g_2] = 4\alpha \partial_z,
\]
which is everywhere linearly independent of $g_1$ and $g_2$. Therefore, $$\mathrm{span}\{g_1, g_2, [g_1, g_2]\} = \mathbb{R}^3,$$ satisfying the H\"ormander condition (Assumption~\ref{ass:Controllability condition}).

Let us define the scaled vector fields for the noise:
\begin{equation*}
V_1 = \sqrt{\epsilon}(\partial_x - 2\alpha y\partial_z), \quad\text{and}\quad 
V_2 = \sqrt{\epsilon}(\partial_y + 2\alpha x\partial_z).
\end{equation*}
The drift vector field $V_0$ is given by:
\begin{align*}
V_0 &= -\frac{\epsilon}{2}[\nabla_{g_1}g_1 + \nabla_{g_2}g_2]\cdot\nabla \\
&= -\frac{\epsilon}{2}[(0,0,0)^\top + (0,0,0)^\top]\cdot\nabla = 0.
\end{align*}
Thus, the infinitesimal generator for the process~\eqref{controlled_dynamics} satisfies:
\begin{equation}\label{eq: unified_infinitesimal_generator}
\mathcal{L}_\epsilon = \frac12(V_1^2 + V_2^2)
\end{equation}
and we have:
\[
\mathrm{Lie}(V_1,V_2) = \mathrm{span}\{g_1, g_2, [g_1,g_2]\} = \mathbb{R}^3.
\]
This implies that the parabolic operators
\[
\partial_t + \mathcal{L}_\epsilon
\quad\text{and}\quad
\partial_t - \mathcal{L}_\epsilon^*
\]
with $\mathcal{L}_\epsilon$ as in~\eqref{eq: unified_infinitesimal_generator} are hypoelliptic.
In particular, there exists a smooth and positive transition density $p_{t,\epsilon}$ that solves 
\begin{equation}\label{eq: forw_equ}
(\partial_t - \mathcal{L}_\epsilon^*)\rho = 0.
\end{equation}
To obtain an explicit solution for the transition density function $p_{t,\epsilon}$, we rely on the group structure associated to the vector fields. Let $\mathbb{H}:=(\mathbb{R}^3,\cdot)$ with group multiplication:
\[
(x,y,z) \cdot (x',y',z') = (x + x', y + y', z + z' + 2\alpha(x y' - y x'))
\]
for all $(x,y,z),(x',y',z') \in \mathbb{R}^3$. Then, one can check that the  vector fields in~\eqref{eq: unified_vector_fields} are left-invariant on $\mathbb{H}$. Thus, following the same calculations in~\cite{barilari2012small,bonfiglioli2007stratified}, we get that
\begin{equation}
p_{t,\epsilon}(\vec{0}, \vec{q}) = \frac{1}{(2\pi \epsilon t)^2} \int_{\mathbb{R}} \frac{2\alpha\lambda}{\sinh(2\alpha\lambda \epsilon t)} 
\exp\left(-\frac{\alpha\lambda(x^2 + y^2)}{\tanh(2\alpha\lambda \epsilon t)} + 2i\alpha\lambda z\right) d\lambda
\end{equation}
is the transition density function from $\vec{0} = (0,0,0)$ at time $0$ to $\vec{q}_t = (x,y,z)$ at time $t$ that solves~\eqref{eq: forw_equ} with initial condition $p_{0,\epsilon}(\vec{0}, \vec{q}) = \delta_{\vec{0}}(\vec{q})$.
 Therefore, using the group structure, we translate the initial identity to any arbitrary point. That is, for any $\vec{q}=(x,y,z)$ and $\vec{q}_0=(x^\prime,y^\prime,z^\prime)$ we have that 
 \[
p_{t,\epsilon}(\vec{q}_0, \vec{q})=p_{t,\epsilon}(\vec{0}, \vec{q}_0^{-1}\cdot\vec{q}), 
\]
where $(\vec{q}_0)^{-1} =(-x^\prime, -y^\prime, -z^\prime)$. Therefore, the transition density function from $\vec{q}_0=(x^\prime,y^\prime,z^\prime)$ at time $t=0$ to $\vec{q}=(x,y,z)$ at time $t$ is
\begin{multline}\label{eq: unified_heat_kernel}
p_{t,\epsilon}(\vec{q}_0, \vec{q}) = \frac{1}{(2\pi\epsilon t)^2} \int_{\mathbb{R}} \frac{2\alpha\lambda}{\sinh(2\alpha\lambda \epsilon t)} 
\exp\bigg(-\frac{\alpha\lambda((x-x^\prime)^2 + (y-y^\prime)^2)}{\tanh(2\alpha\lambda \epsilon t)} \\
+ 2i\alpha\lambda (z - z^\prime - 2\alpha(x^\prime y - y^\prime x))\bigg) d\lambda.
\end{multline}

Let $(\phi_0,\phi_f)$ satisfy the  Schr\"odinger system 
\begin{equation}\label{eq: SS_BI}
\phi_0(\vec{q}_0)(\mathscr P_{0,t_f} \phi_f)(\vec{q}_0)= \rho_0(\vec{q}_0), \quad\text{and}\quad
(\mathscr P_{0,t_f}^* \phi_0)(\vec{q}_1)\phi_f(\vec{q}_1)= \rho_f(\vec{q}_1),    
\end{equation}
where
\begin{align*}
(\mathscr P_{0,t_f} \phi_f)(\vec{q}_0) =& \int \phi_f(\vec{q}_1) p_{t_f,\epsilon}(\vec{q}_0, \vec{q}_1) d\vec{q}_1, \quad \text{and}\\ 
(\mathscr P_{0,t_f}^* \phi_0)(\vec{q}_1)=& \int \phi_0(\vec{q}_0) p_{t_f,\epsilon}(\vec{q}_0, \vec{q}_1) d\vec{q}_0.    
\end{align*}
Then, following from~\eqref{eq: admissible control}, the optimal control process in~\eqref{controlled_dynamics} is characterized as 
\begin{equation}\label{eq: unified_optimal_control}
 u_{\epsilon}(t,x,y,z) = \epsilon \begin{pmatrix}
1 & 0 & -2\alpha y \\
0 & 1 & 2\alpha x
\end{pmatrix} \nabla\log\varphi_\epsilon(t,x,y,z),   
\end{equation}
where $\varphi_\epsilon$ is computed using our explicit heat kernel:
\begin{equation}\label{eq: unified_phi_definition}
\varphi_\epsilon(t,x,y,z) = \int \phi_f(x',y',z') p_{t_f-t,\epsilon}((x,y,z), (x',y',z')) dx'dy'dz',
\end{equation}
for fixed terminal datum $\phi_f$ in~\eqref{eq: SS_BI}. This is the explicit solution of a family of Sub-Riemannian Schr\"odinger Bridges for any $\epsilon>0$.

Since the vector fields  $g_1, g_2$ induces H\"ormander operators on $\mathbb{H}$, from~\cite{BenArous1988,barilari2012small} we have the asympototic formula:
\begin{equation}\label{eq: unified_varadhan}
  -2(t_f - t) \epsilon \log p_{t_f - t,\epsilon}(\vec{q},\vec{q}_1)
  \;\longrightarrow\;
  d_{\mathrm{SR}}^2(\vec{q},\vec{q}_1)
\end{equation}
as $\epsilon\to 0$, where $d_{\mathrm{SR}}$ is the sub-Riemannian distance for the structure defined by the vector fields. For this family of systems, the sub-Riemannian distance has the explicit expression:
\[
d_{\mathrm{SR}}^2(q_1, q_2) = d_{\mathrm{SR}}^2(0, q_1^{-1} q_2)
=
\frac{(2\alpha\theta_c)^2}{\sin^2(2\alpha\theta_c)} (\Delta x^2 + \Delta y^2)
\]
where $\theta_c \in (0,\pi/(2\alpha))$ uniquely solves:
\[
\frac{|\Delta z|}{\Delta x^2 + \Delta y^2} = \frac{1}{2\alpha}\left(\frac{2\alpha\theta_c}{\sin^2(2\alpha\theta_c)} - \cot(2\alpha\theta_c)\right),
\]
with
\[
\Delta x = x_2 - x_1, \quad \Delta y = y_2 - y_1, \quad \Delta z = z_2 - z_1 + 2\alpha(x_1 y_2 - y_1 x_2).
\]

Plugging~\eqref{eq: unified_varadhan} into the representation~\eqref{eq: unified_phi_definition} gives, for small $\epsilon>0$:
\begin{equation}\label{eq: unified_phi_laplace}
  \varphi_\epsilon(t,\vec{q})
  = \int_{\mathbb{H}} \phi_f(\vec{q}_1)
    \exp\left(
      -\frac{d_{\mathrm{SR}}^2(\vec{q},\vec{q}_1)}{2\epsilon(t_f - t)}
      + o\bigl(\tfrac{1}{\epsilon}\bigr)
    \right) d\vec{q}_1,
\end{equation}
and thus by the Laplace principle (see, e.g.,~\cite{FreidlinWentzell2012}), we have that:
\begin{equation}\label{eq: unified_laplace_principle}
  -\epsilon \log \varphi_\epsilon(t,\vec{q})
  \;\longrightarrow\;
   \varphi_0(t,\vec{q}):=\inf_{\vec{q}_1\in\mathbb{H}}
  \left\{
    \Phi_f(\vec{q}_1)
    + \frac{d_{\mathrm{SR}}^2(\vec{q},\vec{q}_1)}{2(t_f - t)}
  \right\},
\end{equation}
where $\Phi_f(\vec{q}_1) := -\lim_{\epsilon\to 0}\epsilon \log \phi_f(\vec{q}_1)$.

Therefore, from~\eqref{eq: unified_optimal_control} and~\eqref{eq: unified_laplace_principle}, we have that:
\[
u_{\epsilon}(t,x,y,z)\;\longrightarrow\;u_0(t,x,y,z):=-\begin{pmatrix}
1 & 0 & -2\alpha y \\
0 & 1 & 2\alpha x
\end{pmatrix} \nabla\varphi_0(t,x,y,z)
\]
is the optimal control for the ensemble of constrained systems~\eqref{eq: unified_dynamics}.}
\end{example}

\bibliographystyle{amsplain}
\bibliography{references}

\end{document}